\documentclass{birkjour}
\usepackage{amssymb,amscd,amsthm,amsmath}
\usepackage{amsfonts}
%
%
 \newtheorem{thm}{Theorem}[section]
 \newtheorem{cor}[thm]{Corollary}
 \newtheorem{lem}[thm]{Lemma}
 \newtheorem{pro}[thm]{Proposition}
 \theoremstyle{definition}
 \newtheorem{defn}[thm]{Definition}
 \theoremstyle{remark}
 \newtheorem{rem}[thm]{Remark}
 
 \numberwithin{equation}{section}
\usepackage{xcolor}

\begin{document}

%
%
%
%
%
%
%
%
%

\title[$\alpha$-BS dimension on  subsets]
 {$\alpha$-BS dimension on  subsets}

\author[Ding]{Zhumin Ding$^{1}$}

\address{%
School of Mathematical Sciences \\ Jiangxi Science and Technology Normal University \\ Nanchang 330038, P. R. China}

\email{dingzhumin1993@163.com}
\thanks{*Corresponding author. \\ The first author was supported by Jiangxi Provincial Natural Science Foundation (No. 20252BEJ730353) and Jiangxi Science and Technology Normal University Doctoral Research Initiation Fund (No. 2024BSQD34).  The second author was  supported by  the China Postdoctoral Science Foundation (No. 2024M763856) and  the Postdoctoral Fellowship Program of CPSF  (No. GZC20252040). The  third author   was  supported by the National Natural Science Foundation of China (No. 11971236) and Qinglan project of Jiangsu  Province.}

\author[Yang]{Rui Yang$^{2*}$}
\address{College of Mathematics and Statistics\\ Chongqing University\\ Chongqing, 401331, P.R.China}
\email{zkyangrui2015@163.com}
\author[Zhou]{Xiaoyao Zhou$^{3}$}
\address{School of Mathematical Sciences\\ Ministry of Education Key Laboratory of
NSLSCS\\ Nanjing Normal University\\ Nanjing 210023, Jiangsu, P.R.China}
\email{zhouxiaoyaodeyouxian@126.com}
\subjclass{ Primary 37A35; Secondary 94A17; 93C25}

\keywords{$\alpha$-BS  dimension,  Bowen's equation,
 $\alpha$-local  Brin-Katok entropy, variational principle}

\date{January 1, 2004}

\begin{abstract}
We aim to investigate the dimension theory of $\alpha$-pressure-like quantities.
By means of the Carath$\acute{\rm e}$odory-Pesin structure, we define $\alpha$-BS dimension and $\alpha$-Pesin topological pressure on subsets using $\alpha$-Bowen metric $$d_{n}^{\alpha}(x,y)=\max_{0\leq i\leq  n-1}e^{\alpha i}d(f^{i}x,f^{i}y),$$
where $\alpha \geq 0$. Specifically, we show that $\alpha$-BS dimension and $\alpha$-Pesin topological pressure are related by a Bowen's equation. Inspired by the classical Brin-Katok entropy, we introduce the notion of $\alpha$-local Brin-Katok entropy, and establish a variational principle for $\alpha$-BS dimension on compact subsets in terms of $\alpha$-local Brin-Katok entropy. Besides, for subshifts of finite type, we prove that $\alpha$-Bowen topological entropy is closely related to spectral radius and Hausdorff dimension.
\end{abstract}

\maketitle
\section{Introduction }

Let $(X,d,f)$ be a topological dynamical system (TDS for short), where $X$ is a compact metric space with a metric $d$, $f:X\rightarrow X$ is a continuous self-map.  The  space of all real-valued continuous  function is denoted by $C(X,\mathbb{R})$, endowed with the supremum norm $||\cdot||$. Denote by $M(X)$, $M(X,f)$ the sets of  Borel probability measures on $X$, and $f$-invariant  Borel probability measures on $X$, respectively.

 Given an abstract topological dynamical system, people are concerned about its dynamical behaviors. From the  quantitative point of view, one can introduce some  useful quantities to characterize the topological complexity of  dynamical systems. Kolmogorov \cite{kol58} and Sinai \cite{s59}  first realized  this idea  by introducing measure-theoretic entropy. Later, Adler, Konheim, and McAndrew \cite{A1}  introduced the concept of topological entropy. The variational principle \cite{din71,g0071,g0072} relates them by  stating that
 $$h_{top}(f,X)=\sup_{\mu \in M(X,f)}{h_{\mu}(f)},$$
 where $h_{top}(f,X)$ denotes the topological entropy of $X$,  and $h_{\mu}(f)$ is the measure-theoretic entropy of $\mu$. In 1973,  resembling the definition of Hausdorff dimension, Bowen \cite{b73} in his profound paper  further  extended the  concept of  topological entropy to arbitrary subsets of phase space, which we call Bowen topological entropy.  By treating the Bowen topological entropy    as  an analogue of dimension from the viewpoint of dimension theory,  the new ideas and techniques  from dimension theory can be injected to develop the entropy  theory of dynamical systems.  In 2012, Feng and Huang \cite{fh12}  established an analogous variational principle for Bowen topological entropy  in terms of  lower Brin-Katok local entropy. It states that for every  non-empty compact subset $K \subset X$,
 $$h_{top}^B(f,K)=\sup\{\underline{h}_{\mu}^{BK}(f): \mu \in \mathcal{M}(X), \mu(K)=1\},$$
 where $h_{top}^B(f,K)$ denotes the Bowen topological entropy of  $K$, and  $\underline{h}_{\mu}^{BK}(f)$ is the  lower Brin-Katok  local entropy of $\mu$. It turns out that  topological entropy, together with its variational principle, becomes a vital  tool in  the  investigating of  ergodic theory, multifractal analysis and  other fields of dynamical systems.

 Inspired by statistical mechanics, topological pressure, a generalization of topological entropy, was introduced by Ruelle \cite{rue73} for  certain dynamical systems, and later by  Walters for general dynamical systems \cite{w82}. As an extension of Bowen topological entropy,  Pesin and Piskel \cite{pp84} extended the notion of topological pressure to any subsets  using Carath$\acute{\rm e}$odory-Pesin structure. Readers are referred to the monographs \cite{w82,p97}  for the thermodynamic formalism theory of topological pressure and the related topics about the applications of topological pressure.

 In 1979, Bowen \cite{RB} demonstrated that the Hausdorff dimension of certain compact sets (quasi-circles) is the unique root of the equation defined by the topological pressure of a geometric potential function, which is also known as Bowen's equation. For continuous potential functions, Barreira and Schmeling \cite{bs00} introduced the concept of BS dimension, and  derived  a Bowen's equation for BS-dimension, proving it to be the unique root of the equation defined by the topological pressure of an additive potential function. It turns out that Bowen's equations provide a bridge between  thermodynamic formalism and dimension theory of dynamical systems, and  also serve as a powerful tool to estimate the Hausdorff dimension of the certain sets. See \cite{rue82,mm83,mu04,mu10,ms13} for some applications of Bowen's equations in this aspect. Besides, the authors in \cite{C1, DL} established the variational principles and Bowen's equations for BS dimension of $\mathbb{Z}$-actions and finitely generated free semi-group actions.

 Estimation entropy has its root in  control systems as the role of topological entropy played in  topological dynamical systems. The  estimation entropy, introduced  by  Liberzon and Mitra \cite{D2,D3}, measures  the smallest bit rate for exponential state estimation with a given exponent $\alpha\geq0$ for a continuous-time  control system on a compact subset of its state space. Kawan \cite{k13} generalized the estimation entropy to topological dynamical systems using  the $\alpha$-Bowen metric. It coincides with the classical topological entropy  \cite{w82} if  $\alpha=0$, but for the case of $\alpha> 0$, it depends on the metrics on the underlying space, and hence is not a purely topological invariant.  In \cite{ZC}, Zhong and Chen introduced the concept of Bowen estimation entropy for any subsets, and established an analogous Feng-Huang's variational principle for it.
 Recall that any two distinct points in the classical Bowen ball of the length $n$ with radius $\varepsilon$ are close within  a distance of  $\varepsilon$.  However, in the $\alpha$-Bowen ball of the length $n$ with radius $\varepsilon$, the distance between two distinct points is not just at most  $\frac{\varepsilon}{\mathrm{e}^{\alpha n}}$, and the distance between two distinct points can be sufficiently small for a long time. Therefore,  estimation entropy may increase the entropy (or disorder) of dynamical systems. This new phenomenon and the aforementioned work  mutually motivate us to ask whether there are  some possible extensions of $\alpha$-pressure-like quantities\footnote[1]{To show how dynamics of the systems are changed by varying the  parameter $\alpha>0$, we shall use  the terminology $\alpha$-pressure-like quantities instead of the estimation pressures.} to describe the topological complexity of dynamical systems. Moreover, the corresponding Bowen's equations and variational principles still hold for such $\alpha$-pressure-like quantities. In this paper, we contribute to the dimension theory of $\alpha$ pressure-like quantities. Using some functional analysis techniques and geometric  measure theory, we establish a Bowen's equation and a variational principle for $\alpha$-BS dimension, which provide the fundamental bridge between topological dynamics and dimension theory.

Next, we introduce the notion of $\alpha$-BS dimension on subsets.
Given $\varphi \in C(X,\mathbb{R})$ and  $x\in X$, we set $S_n\varphi(x):=\sum_{j=0}^{n-1}\varphi(f^jx)$. Given $\alpha\geq0$, $n\in\mathbb{N}$ and $x,y\in X$, the $\alpha$-Bowen metric on $X$  is defined by $$d_{n}^{\alpha}(x,y)=\max_{0\leq i\leq  n-1}e^{\alpha i}d(f^{i}x,f^{i}y).$$
The Bowen open ball of radius $\varepsilon$ centered at $x$ in the $\alpha$-Bowen metric is given by $$B_{n}^{\alpha}(x,\varepsilon)=\{y\in X:d_{n}^{\alpha}(x,y)<\varepsilon\}.$$

Analogous to the definition of BS dimension \cite{bs00}, by means of $\alpha$-Bowen balls we introduce BS dimension for any subsets using the theory of Carath$\acute{\rm e}$odory-Pesin structure \cite{p97}.

Let $K\subset X$,$\epsilon>0$, $\alpha\geq 0,s\geq 0$ and $n\in\mathbb{N}$, and $\varphi \in C(X,\mathbb{R})$  be a positive continuous function on $X$. Put
$$M^{\alpha}(K,s,\varepsilon,n,\varphi)=\inf\left\{\sum_{i}\exp(-s \cdot \sup_{y\in B_{n_i}^{\alpha}(x_i,\varepsilon)} S_{n_i}\varphi(y))\right\},$$
where the infimum is taken over all finite or countable families $\{B_{n_{i}}^{\alpha}(x_{i},\varepsilon)\}$ such that $x_i\in X,n_i\geq n$, and $\bigcup_{i}B_{n_{i}}^{\alpha}(x_{i},\varepsilon)\supseteq K$.

Here, the weight ${\rm exp}(- \sup_{y\in B_{n_i}^{\alpha}(x_i,\varepsilon)} S_{n_i}\varphi(y))$ can be regarded as a dynamical radius of the $\alpha$-Bowen ball $B_{n_{i}}^{\alpha}(x_{i},\varepsilon)$ as we have considered in  fractal geometry.

 Clearly, $M^{\alpha}(K,s,\varepsilon,n,\varphi)$ is non-decreasing as $n$ increases, and hence the following limit exists:
$$M^{\alpha}(K,s,\varepsilon,\varphi)=\lim_{n\rightarrow\infty}M^{\alpha}(K,s,\varepsilon,n,\varphi).$$
Denote by $M^{\alpha}(K,\varepsilon,\varphi)$ the critical value of $s$ such that $M^{\alpha}(K,s,\varepsilon,\varphi)$ jumps from $\infty$ to $0$, that is
$$M^{\alpha}(K,\varepsilon,\varphi)=\inf\{s:M^{\alpha}(K,s,\varepsilon,\varphi)=0\}=\sup\{s:M^{\alpha}(K,s,\varepsilon,\varphi)=\infty\}.$$

We define the \emph{$\alpha$-BS dimension}\footnote[2]{$\alpha$-BS dimension, as well as the subsequent $\alpha$-pressure(or entropy)-like quantities, depend on the metrics on $X$ although we do not clarify the metric $d$ in our notions, and remain unchanged if they take the set of  bi-Lipschitz metrics on $X$ (i.e., the set of all   compatible  metrics such that  the identity mapping $id:(X,d_1)\rightarrow (X,d_2)$ is a  bi-Lipschitz  mapping). Besides,  for $\alpha$-Bowen metric one can change the exponent ``$\mathrm{e}$" into any positive real number $\beta >1$. Following the similar procedure  and formulating the resulting $\alpha$-BS dimension ${\rm dim}_{BS}^{\alpha,\beta}(f,K,\varphi)$, we can conclude that ${\rm dim}_{BS}^{\alpha,\beta}(f,K,\varphi)={\rm dim}_{BS}^{\alpha\cdot \log\beta}(f,K,\varphi)$.} of $K$ with respect to (w.r.t.) $\varphi$ as
$${\rm dim}_{BS}^{\alpha}(f,K,\varphi)=\lim_{\varepsilon\rightarrow 0}M^{\alpha}(K,\varepsilon,\varphi).$$

We remark that this $\alpha$-BS dimension is a type of entropy, not a geometric (Hausdorff) dimension, and  has a close relation with the existing important concepts in dynamical systems and dimension theory. 

$(1)$  if $\varphi=1$,  the notion ${\rm dim}_{BS}^{\alpha}(f,K,1)$ recovers the $\alpha$-Bowen topological entropy $h_{top}^{\alpha,B}(f,K)$ of $f$ on $K$ \cite{ZC};

$(2)$   if $\alpha =0$, the notion ${\rm dim}_{BS}^{0}(f,K,\varphi)$ recovers the BS dimension introduced by Barreira and Schmeling \cite{bs00};

$(3)$  if $\varphi =1$ and $\alpha =0$,  the $\alpha$-BS dimension is reduced to  the Bowen topological entropy \cite{b73}.

The main results of this paper are as follows:

\begin{thm}\label{thm 1.1}
Let  $(X,d,f)$ be a TDS, $\alpha\geq0$ and $\varphi \in C(X,\mathbb{R})$ with $\varphi >0$. Suppose that $ h_{top}^{\alpha,B}(f,X)<\infty$. Then for any $K\subset X$,  ${\rm dim}_{BS}^{\alpha}(f,K,\varphi)$ is the unique root of the equation $$\Phi(t)=P_{B}^{\alpha}(f,K,-t\varphi)=0,$$
 and
 \begin{align*}
 {\rm dim}_{BS}^{\alpha}(f,K,\varphi)&=\inf\{t: P^{\alpha}_{B}(f,K,-t\varphi)\leq 0\}\\
 &=\sup\{t: P^{\alpha}_{B}(f,K,-t \varphi)\geq 0\},
 \end{align*}
 where  $P_{B}^{\alpha}(f,K,\varphi)$ denotes the $\alpha$-Pesin topological pressure of $\varphi$ on $K$.
\end{thm}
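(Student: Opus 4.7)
The argument follows the classical Bowen's equation template: verify that $\Phi$ is continuous and strictly decreasing (so the root $t^{\ast}$, if it exists, is unique), and then bracket $\dim_{BS}^{\alpha}(f,K,\varphi)$ between $\inf\{t:\Phi(t)\le 0\}$ and $\sup\{t:\Phi(t)\ge 0\}$ by a direct comparison of the two Carath\'eodory--Pesin structures involved.

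\textbf{Uniqueness via strict decrease of $\Phi$.} Since $\varphi$ is continuous on the compact space $X$ and $\varphi>0$, there is $\varphi_{\min}>0$ with $\varphi\ge \varphi_{\min}$. For $t_{2}>t_{1}\ge 0$, the pointwise identity $S_{n}(-t_{2}\varphi)=S_{n}(-t_{1}\varphi)-(t_{2}-t_{1})S_{n}\varphi$ together with $S_{n}\varphi\ge n\varphi_{\min}$ feeds into the definition of $\alpha$-Pesin pressure to give
$$P_{B}^{\alpha}(f,K,-t_{2}\varphi)\le P_{B}^{\alpha}(f,K,-t_{1}\varphi)-(t_{2}-t_{1})\varphi_{\min}.$$
Combined with $\Phi(0)=h_{top}^{\alpha,B}(f,K)\ge 0$ and $\Phi(t)\le h_{top}^{\alpha,B}(f,X)-t\varphi_{\min}\to -\infty$ (using the finiteness hypothesis), this produces a unique $t^{\ast}\ge 0$ with $\Phi(t^{\ast})=0$.

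\textbf{Equality via comparison of the two structures.} The engine of the comparison is the uniform-continuity oscillation bound: there is a modulus $\gamma(\varepsilon)\to 0$ as $\varepsilon\to 0$ with
$$\sup_{y\in B_{n}^{\alpha}(x,\varepsilon)}S_{n}\varphi(y)-\inf_{y\in B_{n}^{\alpha}(x,\varepsilon)}S_{n}\varphi(y)\le n\gamma(\varepsilon),$$
since points in $B_{n}^{\alpha}(x,\varepsilon)$ already satisfy $d(f^{i}x,f^{i}y)<\varepsilon$ for $0\le i\le n-1$. Suppose first $\Phi(t)<0$, and pick $\eta>0$ with $P_{B}^{\alpha}(K,-t\varphi,\varepsilon)<-\eta$ at all small scales $\varepsilon$. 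Then for arbitrary $\delta>0$ and large $N$ there exists a cover $\{B_{n_{i}}^{\alpha}(x_{i},\varepsilon)\}$ of $K$ with $n_{i}\ge N$ and
$$\sum_{i}\exp\bigl(\eta n_{i}-t\inf_{y}S_{n_{i}}\varphi(y)\bigr)<\delta.$$
Using $\exp(-t\sup)\le \exp(-t\inf)$ and $e^{-\eta n_{i}}\le e^{-\eta N}$, one gets $M^{\alpha}(K,t,\varepsilon,N,\varphi)\le e^{-\eta N}\delta\to 0$, hence $M^{\alpha}(K,t,\varepsilon,\varphi)=0$ and $\dim_{BS}^{\alpha}(f,K,\varphi)\le t$. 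For the reverse direction, suppose $\Phi(t)>0$ and pick $\eta>0$ with $P_{B}^{\alpha}(K,-t\varphi,\varepsilon)>\eta$ and $t\gamma(\varepsilon)<\eta/2$ for all small $\varepsilon$; then every cover with $n_{i}\ge N$ satisfies $\sum_{i}\exp(-\eta n_{i}-t\inf S_{n_{i}}\varphi)>M$ for any prescribed $M$, and the oscillation bound gives
$$\sum_{i}\exp\bigl(-t\sup S_{n_{i}}\varphi\bigr)\ge e^{(\eta-t\gamma(\varepsilon))N}M\to \infty,$$
so $M^{\alpha}(K,t,\varepsilon,\varphi)=\infty$ and $\dim_{BS}^{\alpha}(f,K,\varphi)\ge t$. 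Letting $\varepsilon\to 0$ in the two brackets and combining with the uniqueness in Step~1 identifies $\dim_{BS}^{\alpha}(f,K,\varphi)$ as the unique root of $\Phi$, and the same comparison immediately yields the $\inf$ and $\sup$ formulas.

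\textbf{Main obstacle.} The principal delicacy is reconciling the sup/inf mismatch between the two Carath\'eodory structures: the weight defining $M^{\alpha}$ uses $\exp(-t\sup S_{n_{i}}\varphi)$, whereas the pressure of $-t\varphi$ naturally produces $\exp(-t\inf S_{n_{i}}\varphi)$. The gap $n_{i}\gamma(\varepsilon)$ coming from uniform continuity is linear in $n_{i}$, so it must be absorbed by an exponential factor $e^{\pm \eta n_{i}}$ harvested from the pressure weight; this forces one to take $N\to\infty$ first and only afterwards send $\varepsilon\to 0$, and to choose $\eta>t\gamma(\varepsilon)$ in the lower bound. Reversing the order of limits would leave an uncontrolled oscillation factor, which is why the finiteness of $h_{top}^{\alpha,B}(f,X)$ (hence of $P_{B}^{\alpha}(f,K,-t\varphi)$) is needed to separate the relevant scales cleanly.
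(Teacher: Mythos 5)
Your argument is essentially the paper's: Step~1 establishes strict monotonicity of $\Phi$ from $S_n\varphi\ge n\varphi_{\min}$, and Step~2 compares the two Carath\'eodory--Pesin structures to bracket ${\rm dim}_{BS}^{\alpha}(f,K,\varphi)$ between $\inf\{t:\Phi(t)\le 0\}$ and $\sup\{t:\Phi(t)\ge 0\}$. Where you differ is in the execution of the comparison: the paper simply asserts the identity $\mathcal{M}_{B}^{\alpha}(K,0,\varepsilon,n,-s\varphi)=M^{\alpha}(K,s,\varepsilon,n,\varphi)$, which as stated is only an inequality ($\sup_{y}S_{n_i}(-s\varphi)(y)=-s\inf_{y}S_{n_i}\varphi(y)\ge -s\sup_{y}S_{n_i}\varphi(y)$ for $s\ge0$); the missing direction needs exactly the oscillation correction $n_i\gamma(\varepsilon)$ that you absorb into the $e^{\pm\eta n_i}$ margin (or, equivalently, the paper's Proposition~2.1 device of replacing the supremum by the value at the center). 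So your treatment of the sup/inf mismatch is a legitimate, and in fact more careful, version of the paper's Step~2, and your bracketing together with strict decrease does yield the $\inf$/$\sup$ formulas.

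The one genuine loose end is continuity of $\Phi$: you announce it in the strategy but never prove it, and then derive the existence of a root from strict decrease plus $\Phi(0)\ge 0$ and $\Phi(t)\to-\infty$. That inference fails without continuity --- a strictly decreasing function can jump across $0$ --- and continuity is also what upgrades your brackets ($\Phi\ge 0$ on $(-\infty,{\rm dim}_{BS}^{\alpha})$, $\Phi\le 0$ on $({\rm dim}_{BS}^{\alpha},\infty)$) to the assertion $\Phi({\rm dim}_{BS}^{\alpha}(f,K,\varphi))=0$ that the theorem actually claims. The fix is the Lipschitz estimate $|\Phi(t_1)-\Phi(t_2)|\le \|\varphi\|\,|t_1-t_2|$, i.e.\ $|P^{\alpha}_{B}(f,K,h_1)-P^{\alpha}_{B}(f,K,h_2)|\le \|h_1-h_2\|$ (Proposition~\ref{p23}(4)), and this is precisely where the hypothesis $h_{top}^{\alpha,B}(f,X)<\infty$ is needed to keep the pressures finite so that the difference estimate is meaningful; its proof is the two-sided analogue of your Step~1 manipulation, with $\|\varphi\|$ in place of $\varphi_{\min}$. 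With that inserted, your proof is complete.
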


\begin{thm}\label{thm 1.2}
Let  $(X,d,f)$ be a TDS and $\varphi \in C(X,\mathbb{R})$ with $\varphi >0$. Let $K\subset X$ be a non-empty compact subset. Then  for every $\alpha\geq0$,
$${\rm dim}_{BS}^{\alpha}(f,K,\varphi)=\sup\{{P}_{\mu}^{\alpha}(f,\varphi):\mu\in\mathcal{M}(X),\mu(K)=1\},$$
where  ${P}_{\mu}^{\alpha}(f,\varphi)$ is  the $\alpha$-local  Brin-Katok entropy of $\mu$ w.r.t. $\varphi$.
\end{thm}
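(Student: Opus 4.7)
The plan is to mirror the Feng--Huang strategy for Bowen topological entropy, adapted to the $\alpha$-Bowen metric and the weighted Birkhoff sums $S_n\varphi$. I would prove the two inequalities separately; the direction ``$\ge$'' follows from the pointwise definition of $P_\mu^\alpha$ by a covering-to-measure comparison, while the direction ``$\le$'' requires constructing, for each $s$ below ${\rm dim}_{BS}^\alpha(f,K,\varphi)$, a Frostman-type measure supported essentially on $K$ whose $\alpha$-Bowen-ball masses are controlled by $\exp(-s\,S_n\varphi)$. Throughout I use the expected definition $P_\mu^\alpha(f,\varphi)=\lim_{\varepsilon\to0}\int\liminf_{n\to\infty}\frac{-\log\mu(B_n^\alpha(x,\varepsilon))}{S_n\varphi(x)}\,d\mu(x)$.

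For the ``$\ge$'' direction, fix $\mu$ with $\mu(K)=1$ and $s<P_\mu^\alpha(f,\varphi)$. By the definition of $\alpha$-local Brin--Katok entropy and Egorov's theorem, for all sufficiently small $\varepsilon>0$ there is a Borel subset $K_m\subset K$ with $\mu(K_m)>1/2$ and an integer $m$ such that $\mu(B_n^\alpha(x,\varepsilon))\le\exp(-s\,S_n\varphi(x))$ for every $x\in K_m$ and every $n\ge m$. Given any admissible cover $\{B_{n_i}^\alpha(x_i,\varepsilon)\}$ of $K_m$ with $n_i\ge m$, I can keep only those balls actually meeting $K_m$, and using the uniform continuity of $\varphi$ I can absorb the difference between $\sup_{y\in B_{n_i}^\alpha(x_i,\varepsilon)}S_{n_i}\varphi(y)$ and $S_{n_i}\varphi(x_i)$ into a term $o(S_n\varphi)$, yielding
\[
\sum_i\exp\!\Bigl(-s\sup_{y\in B_{n_i}^\alpha(x_i,\varepsilon)}S_{n_i}\varphi(y)\Bigr)\ge C\sum_i\mu(B_{n_i}^\alpha(x_i,\varepsilon))\ge C\mu(K_m)>C/2.
\]
Hence $M^\alpha(K,s,\varepsilon,m,\varphi)\ge C/2$, so ${\rm dim}_{BS}^\alpha(f,K,\varphi)\ge s$, and letting $s\uparrow P_\mu^\alpha(f,\varphi)$ finishes this direction.

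For the harder ``$\le$'' direction, pick $s<s'<{\rm dim}_{BS}^\alpha(f,K,\varphi)$, so that for all small $\varepsilon$ the quantity $M^\alpha(K,s',\varepsilon,\varphi)=\infty$. The core step is a Frostman-type lemma, analogous to Feng--Huang's Proposition~3.2: for each integer $N$ and each small $\varepsilon$ there exists a finite Borel measure $\mu_{N,\varepsilon}$ with $\mu_{N,\varepsilon}(K)\ge c>0$ and
\[
\mu_{N,\varepsilon}\bigl(B_n^\alpha(x,\varepsilon)\bigr)\le\exp\!\Bigl(-s'\sup_{y\in B_n^\alpha(x,\varepsilon)}S_n\varphi(y)\Bigr)\quad\text{for every }n\ge N,\ x\in X.
\]
This is proved by the stopping-time construction: distribute mass along a descending tree of $\alpha$-Bowen balls (using compactness of $X$ to get a finite cover at each level) and use the hypothesis $M^\alpha(K,s',\varepsilon,N,\varphi)>c$ to push mass past every stopping level. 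I would then extract weak-$\ast$ limits first as $N\to\infty$ (with $\varepsilon$ fixed) to get $\mu_\varepsilon$, and then along a sequence $\varepsilon_k\to0$ to get $\mu$, using that the $B_n^\alpha(x,\varepsilon)$ are open so that $\mu(B_n^\alpha(x,\varepsilon/2))\le\liminf\mu_{N,\varepsilon}(B_n^\alpha(x,\varepsilon))$. Normalizing, $\mu$ is a probability measure with $\mu(K)=1$ (after an initial step showing that the $\mu_{N,\varepsilon}$ concentrate on shrinking neighborhoods of $K$), and the inherited ball bound forces $\liminf_n\frac{-\log\mu(B_n^\alpha(x,\varepsilon/2))}{S_n\varphi(x)}\ge s'$ for $\mu$-a.e.\ $x$, hence $P_\mu^\alpha(f,\varphi)\ge s'>s$.

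The main obstacle is the Frostman lemma step: the $\alpha$-Bowen balls are neither nested nor have a convenient doubling structure in the ambient metric, so the stopping-time construction must be carried out in the $\alpha$-Bowen metrics $d_n^\alpha$, and the transition from the finite covers used in defining $M^\alpha(K,s',\varepsilon,N,\varphi)$ to the Borel measure $\mu_{N,\varepsilon}$ requires a careful induction on $n$ together with an upper-semicontinuity argument (a Vitali-type selection) to guarantee that the weak-$\ast$ limits respect the inequality after shrinking $\varepsilon$ slightly. A secondary technical point is that $\mu$ must genuinely be concentrated on $K$ (not merely close to it) in order to qualify as a competitor in the supremum; this is handled by exhausting $K$ with compact subsets of nearly full outer capacity and controlling the loss using the monotonicity of $M^\alpha$ in the set argument.
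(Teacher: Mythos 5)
Your ``$\le$'' direction has a genuine gap at its core step. You assert a Frostman-type lemma (a measure $\mu$ with $\mu(K)$ bounded below and $\mu(B_n^{\alpha}(x,\varepsilon))\le\exp(-s'\sup_{y\in B_n^{\alpha}(x,\varepsilon)}S_n\varphi(y))$ for \emph{all} $x\in X$ and \emph{all} $n\ge N$) and propose to prove it by a stopping-time mass distribution along ``a descending tree of $\alpha$-Bowen balls.'' But $\alpha$-Bowen balls of different orders $n$ are not nested and carry no dyadic-type tree structure, and the required estimate must hold for every ball at every scale $n\ge N$, not merely for the balls selected in a constructed hierarchy; your sketch does not address either point, and this is precisely the obstruction that the paper (following Feng--Huang) circumvents by a different device: it introduces the weighted quantity $W^{\alpha}(K,s,\varepsilon,n,\varphi)$, proves ${\rm dim}_{BS}^{\alpha,W}={\rm dim}_{BS}^{\alpha}$ via the $5r$-covering lemma and a Federer/Mattila weight-reduction argument (Theorem \ref{t34}), and then obtains the Frostman measure functional-analytically (Lemma \ref{l36}): define $p(g)=\frac1c W^{\alpha}(\chi_K\cdot g,s,\varepsilon,n,\varphi)$, extend by Hahn--Banach, and represent by Riesz, which yields $\mu(K)=1$ and the ball bound at the \emph{fixed} $\varepsilon$ directly. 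In particular no weak-$\ast$ limits in $N$ or $\varepsilon$ are needed; your second limit $\varepsilon_k\to0$ is not only unproved (the bounds for $\mu_{N,\varepsilon_k}$ live only at scale $\varepsilon_k$ and do not pass to the limit measure) but also unnecessary, since a bound at one fixed $\varepsilon$ already forces $P^{\alpha}_{\mu}(f,x,\varphi)\ge s'$ by monotonicity in $\varepsilon$. As written, the key lemma is asserted rather than proved, so the ``$\le$'' inequality is not established.

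The ``$\ge$'' direction follows the same Billingsley-type route as the paper (Theorem \ref{t33}), but two details need repair. First, from $s<P^{\alpha}_{\mu}(f,\varphi)=\int P^{\alpha}_{\mu}(f,x,\varphi)\,d\mu$ you can only conclude that the set where the pointwise quantity exceeds $s$ has \emph{positive} measure, not measure $>1/2$; positivity suffices for your covering estimate, but the Egorov-type exhaustion should be run on that positive-measure set (and on a fixed scale $\varepsilon=1/m$, as in the paper's $K_{M,N^*}$). Second, the discrepancy between $\sup_{y\in B_{n_i}^{\alpha}(x_i,\varepsilon)}S_{n_i}\varphi(y)$ and $S_{n_i}\varphi(x_i)$ is of size $n_i\gamma(\varepsilon)$, so it cannot be absorbed into a single constant $C$ uniform in $n_i$; one must either sacrifice a margin in the exponent (the paper's $s-\beta$, letting $\beta\to0$) or invoke the equivalence $\widetilde{\rm dim}_{BS}^{\alpha}={\rm dim}_{BS}^{\alpha}$ from Proposition 2.1, and likewise balls meeting $K_m$ with centers outside it must be handled by the radius-doubling trick (cover at radius $\varepsilon/2$, recentre at a point of $K_m$ inside radius $\varepsilon$). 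These are fixable, but the ``$\le$'' half needs the weighted-dimension/Hahn--Banach machinery (or a complete substitute) to stand.
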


If $\alpha =0$, we remark that Theorem \ref{thm 1.1} is reduced to the Bowen's equation for BS dimension \cite[Proposition 6.4]{bs00}, and Theorem \ref{thm 1.2} is reduced to  the variational principle for BS-dimension \cite[Theorem 7.2]{C1}.


We calculate the $\alpha$-topological entropy of the subshifts of finite type and the Hausdorff dimension of the subsets of the phase space.

\begin{thm}\label{thm 1.4}
Let $(\Sigma_A^{+},d,\sigma)$ be a subshift of finite type generated by an incidence matrix $A$ of size $k\times k$. Then for every $\alpha \geq 0$,

$(1)$ For every $E \subset \Sigma_A^{+}$, $${\rm dim}_H(E,d) =\frac{h_{top}^{\alpha,B}(\sigma,E)}{1+\alpha},$$
where ${\rm dim}_H(E,d)$ is the Hausdorff dimension of $E$.

$(2)$ $h_{top}^{\alpha,B}(\sigma,\Sigma_A^{+})=h_{top}^{\alpha}(\sigma, \Sigma_A^{+}) = (1+\alpha) \log r(A),$
where $r(A)$ is the spectral radius of $A$.

Consequently, if $E=\Sigma_A^{+}$, then ${\rm dim}_H(\Sigma_A^{+},d)=\log r(A)$.
\end{thm}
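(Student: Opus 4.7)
My plan is to reduce everything to counting $A$-admissible words by showing that, on $\Sigma_A^{+}$, each $\alpha$-Bowen ball is an exact cylinder of an inflated length. Equip $\Sigma_A^{+}$ with the standard ultrametric $d(x,y)=e^{-N(x,y)}$, where $N(x,y)=\min\{j\ge 0:x_j\ne y_j\}$, so each cylinder $[\,x_0\cdots x_{\ell-1}\,]$ has $d$-diameter $e^{-\ell}$. Using $d(\sigma^{i}x,\sigma^{i}y)=e^{-(N(x,y)-i)}$ for $i<N(x,y)$ (and $=1$ otherwise), a direct calculation gives: for $0<\varepsilon<1$, $n\ge 1$ and $x\in\Sigma_A^{+}$,
$$B_n^{\alpha}(x,\varepsilon)=[\,x_0x_1\cdots x_{\ell(n,\varepsilon)-1}\,],\qquad \ell(n,\varepsilon):=\lfloor(1+\alpha)(n-1)-\log\varepsilon\rfloor+1.$$
Indeed the binding constraint $e^{\alpha(n-1)}d(\sigma^{n-1}x,\sigma^{n-1}y)<\varepsilon$ at the last index yields $N(x,y)>(1+\alpha)(n-1)-\log\varepsilon$, while for $\varepsilon<1$ any $y$ with $N(x,y)<n$ already violates $d_n^{\alpha}(x,y)<\varepsilon$. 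This exact identification of Bowen balls with cylinders is the key lemma underlying both parts.

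For part (1), I would substitute this identification into the definition of $M^{\alpha}(E,s,\varepsilon,n,1)$. A cover $\{B_{n_i}^{\alpha}(x_i,\varepsilon)\}$ of $E$ with $n_i\ge n$ is literally a cylinder cover $\{C_i\}$ of $E$ with $\operatorname{diam}(C_i)=e^{-\ell(n_i,\varepsilon)}\le e^{-\ell(n,\varepsilon)}=:\delta$. Setting $t:=s/(1+\alpha)$ and using $\ell(n_i,\varepsilon)=(1+\alpha)n_i-\log\varepsilon+O(1)$, one obtains
$$\sum_i e^{-sn_i}=\varepsilon^{-t}\sum_i(\operatorname{diam} C_i)^{t}\cdot e^{O(t)},$$
where the $O(t)$ factor is bounded by constants depending only on $t$ and $\alpha$. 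Since $d$ is an ultrametric, every $\delta$-cover of $E$ refines to a cylinder cover of no larger diameter without increasing $\sum(\operatorname{diam})^{t}$; hence the infimum on the right is comparable, up to the $\varepsilon$-prefactor, to the Hausdorff $t$-pre-measure $\mathcal H^{t}_{\delta}(E)$. Letting $n\to\infty$ sends $\delta\to 0$, so $M^{\alpha}(E,s,\varepsilon,1)\asymp\varepsilon^{-t}\mathcal H^{t}(E)$; since $\varepsilon^{-t}$ is finite and positive, the jump from $\infty$ to $0$ happens at $t={\rm dim}_H(E,d)$ independently of $\varepsilon$, so the critical $s$ equals $(1+\alpha){\rm dim}_H(E,d)$, proving $h_{top}^{\alpha,B}(\sigma,E)=(1+\alpha){\rm dim}_H(E,d)$.

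For part (2), I would compute both entropies directly from the cylinder description. Maximal $(n,\varepsilon)$-separated sets in the $\alpha$-Bowen metric are transversals of the partition of $\Sigma_A^{+}$ by cylinders of length $\ell(n,\varepsilon)$, so their cardinality is $|W_{\ell(n,\varepsilon)}|$, the number of $A$-admissible words of length $\ell(n,\varepsilon)$. By Gelfand's formula (or Perron-Frobenius) $m^{-1}\log|W_m|\to\log r(A)$, hence
$$\frac{1}{n}\log|W_{\ell(n,\varepsilon)}|=\frac{\ell(n,\varepsilon)}{n}\cdot\frac{\log|W_{\ell(n,\varepsilon)}|}{\ell(n,\varepsilon)}\longrightarrow(1+\alpha)\log r(A),$$
independently of $\varepsilon$, which gives $h_{top}^{\alpha}(\sigma,\Sigma_A^{+})=(1+\alpha)\log r(A)$. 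Running the same cylinder count through the Carath\'eodory-Pesin cover definition (the minimal cover of $\Sigma_A^{+}$ by $\alpha$-Bowen balls of length $n$ uses exactly $|W_{\ell(n,\varepsilon)}|$ cylinders, each weighted by $e^{-sn}$) yields $h_{top}^{\alpha,B}(\sigma,\Sigma_A^{+})=(1+\alpha)\log r(A)$. The consequence ${\rm dim}_H(\Sigma_A^{+},d)=\log r(A)$ then follows at once by applying part (1) with $E=\Sigma_A^{+}$.

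The main technical obstacle is the careful management of the $-\log\varepsilon$ offset buried in $\ell(n,\varepsilon)$: the prefactor $\varepsilon^{-t}$ must be shown to be finite, positive and independent of both $n$ and the specific cover, so that it cannot shift the critical exponent across the iterated limit (first $n\to\infty$, then $\varepsilon\to 0$). A secondary subtlety is the refinement from arbitrary small-diameter covers to cylinder covers, which is clean here only because of the ultrametric structure of $(\Sigma_A^{+},d)$ and would be genuinely delicate in a general compact metric space. Finally, for reducible $A$ one replaces the sharper asymptotic $|W_m|\asymp r(A)^{m}$ by the weaker Gelfand limit $|W_m|^{1/m}\to r(A)$; either version suffices for both (1) and (2).
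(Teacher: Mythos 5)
Your key lemma --- for $0<\varepsilon<1$ the ball $B_n^{\alpha}(x,\varepsilon)$ is exactly the cylinder $[x_0\cdots x_{\ell(n,\varepsilon)-1}]$ with $\ell(n,\varepsilon)=\lfloor(1+\alpha)(n-1)-\log\varepsilon\rfloor+1$ --- is correct (your clause ``$=1$ otherwise'' is only needed at $i=N(x,y)$, where it does hold), and it is a sharpened form of the two-sided cylinder inclusion the paper works with. Your part (1) is essentially the paper's argument: translate Bowen-ball covers into cylinder covers and back, keeping the $\varepsilon$-dependent prefactor harmless across the iterated limits; the paper phrases this with a cylinder-defined Hausdorff pre-measure, you with the genuine Hausdorff measure plus the ultrametric refinement, which is the same substance. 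Likewise $r_n(\sigma,\alpha,\Sigma_A^{+},\varepsilon)=\#\Sigma_A^{\ell(n,\varepsilon)}$ together with Gelfand's formula gives $h_{top}^{\alpha}(\sigma,\Sigma_A^{+})=(1+\alpha)\log r(A)$, just as in the paper.

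The gap is the final claim of part (2), that ``running the same cylinder count through the Carath\'eodory--Pesin cover definition'' yields $h_{top}^{\alpha,B}(\sigma,\Sigma_A^{+})=(1+\alpha)\log r(A)$. Your count only considers covers in which every Bowen ball has the same length $n$, but $M^{\alpha}(\Sigma_A^{+},s,\varepsilon,n,1)$ is an infimum over covers by balls $B_{n_i}^{\alpha}(x_i,\varepsilon)$ with arbitrary $n_i\ge n$, where deeper balls carry the smaller weight $e^{-sn_i}$. So your argument gives only the easy upper bound $h_{top}^{\alpha,B}\le h_{top}^{\alpha}$ (Proposition \ref{thm 1.3} of the paper); the lower bound --- that mixed-length covers cannot do better --- is precisely the nontrivial point, and by your own part (1) it is equivalent to $\dim_H(\Sigma_A^{+},d)\ge\log r(A)$, which you cannot invoke, since in your scheme that equality is deduced from (1) and (2). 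A covering estimate alone never bounds a Hausdorff-type critical exponent from below; some mass-distribution input is required. The paper closes this by citing Bowen's result \cite{b73} that $h_{top}^{B}(\sigma,\Sigma_A^{+})=h_{top}(\sigma,\Sigma_A^{+})$ at $\alpha=0$, deducing $\dim_H(\Sigma_A^{+},d)=\log r(A)$ from part (1) with $\alpha=0$, and then applying part (1) again for general $\alpha$. Alternatively you can stay on your route: for irreducible $A$ the Parry measure satisfies $\mu([x_0\cdots x_{m-1}])\asymp r(A)^{-m}$, so your ball-equals-cylinder identity gives ${P}_{\mu}^{\alpha}(\sigma,x,1)\ge(1+\alpha)\log r(A)$ for every $x$, and Theorem \ref{thm 1.2} (or Theorem \ref{t33}) then supplies the missing lower bound; for reducible $A$ pass to an irreducible principal submatrix realizing $r(A)$ and use monotonicity of $h_{top}^{\alpha,B}$ under inclusion of subsets.
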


The organization of the paper is as follows. In section \ref{sec:pre}, we derive the  fundamental properties of $\alpha$-BS dimension. In section \ref{sec:est},  we  prove Theorems \ref{thm 1.1}, \ref{thm 1.2}. In section \ref{sec 4}, we prove Theorem \ref{thm 1.4}. In section \ref{sec 5}, around the $\alpha$ entropy-like quantities we pose some open questions.


\section{$\alpha$-BS   dimension on subsets}\label{sec:pre}

In this section, we  derive the  fundamental properties of $\alpha$-BS dimension.

Actually, the aforementioned  definition for $\alpha$-BS dimension  can be alternatively defined  by changing the weight of $\alpha$-Bowen balls. Precisely, we change the $\sup_{y\in B_{n_i}^{\alpha}(x_i,\varepsilon)} S_{n_i}\varphi(y)$ into $ S_{n_i}\varphi(x_i)$ for each $\alpha$-Bowen ball $B_{n_i}^{\alpha}(x_i,\varepsilon)$, and then denote by the quantities $M^{\alpha,1}(K,s,\varepsilon,n,\varphi)$, $M^{\alpha,1}(K,s,\varepsilon,\varphi)$ and $M^{\alpha,1}(K,\varepsilon,\varphi)$, respectively. Also, we define $$\widetilde{\rm dim}_{BS}^{\alpha}(f,K,\varphi)=\lim_{\varepsilon \to 0}M^{\alpha,1}(K,\varepsilon,\varphi).$$
Thus $\widetilde{\rm dim}$ is the notion obtained by taking the value of $ S_{n}\varphi$ in the center $x_i$ of the respective $\alpha$-Bowen ball.
\begin{pro}
For any $\alpha \geq 0$, we have
$$\widetilde{\rm dim}_{BS}^{\alpha}(f,K,\varphi)={\rm dim}_{BS}^{\alpha}(f,K,\varphi). $$
\end{pro}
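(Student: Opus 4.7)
My plan is to prove the two inequalities separately; the first is essentially tautological, while the second relies on uniform continuity and crucially on the hypothesis $\varphi>0$.

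For ${\rm dim}_{BS}^\alpha(f,K,\varphi)\leq\widetilde{\rm dim}_{BS}^\alpha(f,K,\varphi)$, I would note that $x_i\in B_{n_i}^\alpha(x_i,\varepsilon)$ forces $\sup_{y\in B_{n_i}^\alpha(x_i,\varepsilon)} S_{n_i}\varphi(y)\geq S_{n_i}\varphi(x_i)$, hence $\exp(-s\cdot\sup_{y\in B_{n_i}^\alpha(x_i,\varepsilon)} S_{n_i}\varphi(y))\leq\exp(-s\,S_{n_i}\varphi(x_i))$ for every $s\geq 0$. Summing over any admissible cover and taking infima yields $M^\alpha(K,s,\varepsilon,n,\varphi)\leq M^{\alpha,1}(K,s,\varepsilon,n,\varphi)$, and this inequality survives the limits $n\to\infty$ and $\varepsilon\to 0$ as well as the passage to the critical exponents.

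For the reverse inequality I would invoke uniform continuity of $\varphi$ on the compact space $X$: given $\eta>0$ there exists $\delta>0$ such that $d(u,v)<\delta$ implies $|\varphi(u)-\varphi(v)|<\eta$. Taking $\varepsilon<\delta$ and $y\in B_n^\alpha(x,\varepsilon)$, the definition of the $\alpha$-Bowen metric gives $d(f^ix,f^iy)\leq e^{-\alpha i}\varepsilon\leq\varepsilon<\delta$ for $0\leq i\leq n-1$, whence
$$\sup_{y\in B_n^\alpha(x,\varepsilon)} S_n\varphi(y)\leq S_n\varphi(x)+n\eta.$$
At this point I would use $\varphi>0$ on compact $X$ to choose $c>0$ with $\varphi\geq c$, so that $n\leq S_n\varphi(x)/c$. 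Substituting converts the additive $n\eta$ error into a multiplicative one:
$$\sup_{y\in B_n^\alpha(x,\varepsilon)} S_n\varphi(y)\leq(1+\eta/c)\,S_n\varphi(x).$$

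This multiplicative distortion is the mechanism of the proof. Applied termwise it gives $\exp(-s\cdot\sup_{y\in B_{n_i}^\alpha(x_i,\varepsilon)} S_{n_i}\varphi(y))\geq\exp(-s(1+\eta/c)S_{n_i}\varphi(x_i))$; summing over any cover and taking infima yields $M^\alpha(K,s,\varepsilon,n,\varphi)\geq M^{\alpha,1}(K,s(1+\eta/c),\varepsilon,n,\varphi)$. Passing to $n\to\infty$, to the critical exponents, and then to $\varepsilon\to 0$ (restricted to $\varepsilon<\delta$) delivers $\widetilde{\rm dim}_{BS}^\alpha(f,K,\varphi)\leq(1+\eta/c){\rm dim}_{BS}^\alpha(f,K,\varphi)$, and letting $\eta\to 0$ closes the argument. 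The only delicate point is precisely the trick of absorbing the additive error $n\eta$ into a multiplicative perturbation of $s$, which genuinely requires $\varphi$ to be bounded away from zero; without positivity of $\varphi$ the argument would not close, so the hypothesis $\varphi>0$ in the definition is essential here, not merely convenient.
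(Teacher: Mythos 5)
Your proposal is correct and follows essentially the same route as the paper: the easy inequality by comparing the center value with the supremum over the ball, and the reverse by using uniform continuity of $\varphi$ together with the lower bound $\min_X\varphi>0$ to convert the additive error $n\eta$ into a multiplicative perturbation of the exponent $s$, which disappears as $\varepsilon\to 0$ (the paper phrases the same distortion as a factor $1-\gamma(\varepsilon)/m$ on the supremum rather than $1+\eta/c$ on the center value). No gaps.
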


\begin{proof}
By comparing the definitions, we have $\widetilde{\rm dim}_{BS}^{\alpha}(f,K,\varphi)\geq {\rm dim}_{BS}^{\alpha}(f,K,\varphi)$.

Now we turn to verify  the reverse inequality $$\widetilde{\rm dim}_{BS}^{\alpha}(f,K,\varphi)\leq {\rm dim}_{BS}^{\alpha}(f,K,\varphi).$$
Let $\{B_{n_{i}}^{\alpha}(x_{i},\varepsilon)\}$ be a finite or countable family with  $x_i\in X,n_i\geq n$ and $\bigcup_{i}B_{n_{i}}^{\alpha}(x_{i},\varepsilon)\supseteq K$.
 Set $m=\min_{x\in X}\varphi(x)>0$ and $\gamma({\varepsilon})=\sup\{|\varphi(x)-\varphi(y)|:d(x,y)<\varepsilon\}$.
Since
$$\begin{aligned}
\sup_{y\in B_{n_i}^{\alpha}(x_i,\varepsilon)} S_{n_i}\varphi(y)
&\leq\sup_{y\in B_{n_i}^{\alpha}(x_i,\varepsilon)}\sum_{j=0}^{n_{i}-1}|\varphi(f^{j}y)-\varphi(f^{j}x_{i})|+ S_{n_i}\varphi(x_i)\\
&\leq n_{i}\gamma({\varepsilon})+ S_{n_i}\varphi(x_i).
\end{aligned}$$
Thus, using this inequality we obtain that
$$\begin{aligned}
\sum_{i}\exp\left(-s \cdot S_{n_i}\varphi(x_i)\right)
\leq\sum_{i}\exp\left(-s(1-\frac{\gamma({\varepsilon})}{m})\sup_{y\in B_{n_i}^{\alpha}(x_i,\varepsilon)} S_{n_i}\varphi(y)\right)
\end{aligned}$$
for all $s\geq 0$.
Then, by the definitions, it is clear that
$$(1-\frac{\gamma({\varepsilon})}{m})M^{\alpha,1}(K,\varepsilon,\varphi)\leq M^{\alpha}(K,\varepsilon,\varphi).$$
Letting $\varepsilon\rightarrow0$, we complete the proof.
\end{proof}

\begin{pro}\label{p22}
$\alpha$-BS dimension satisfies the following properties:

(1) ${\rm dim}_{BS}^{\alpha}(f,K,\varphi)\geq0$ for all $\alpha \geq 0$;

(2) if $K_{1}\subset K_{2}\subset X$, then ${\rm dim}_{BS}^{\alpha}(f,K_{1},\varphi)\leq {\rm dim}_{BS}^{\alpha}(f,K_{2},\varphi)$;

(3) if $K_{i}\subset X$ for $i=1,2,\ldots,$ then $${\rm dim}_{BS}^{\alpha}(f,\bigcup_{i=1}^{\infty} K_{i},\varphi)=\sup_{i\geq 1} {\rm dim}_{BS}^{\alpha}(f,K_{i},\varphi);$$

(4) let $\alpha \geq 0$ and $K\subset X$. If $h_{top}^{\alpha,B}(f,K)<\infty$, then for all $\varphi \geq 0$ $$\frac{h_{top}^{\alpha,B}(f,K)}{M}\leq {\rm dim}_{BS}^{\alpha}(f,K,\varphi)\leq \frac{h_{top}^{\alpha,B}(f,K)}{m},$$
where $m=\min_{x\in X}\varphi(x)$ and $M=\max_{x\in X}\varphi(x)$.
\end{pro}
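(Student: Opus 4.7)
The plan is to read off each of the four properties from the defining characterization of ${\rm dim}_{BS}^{\alpha}(f,K,\varphi)$ as the phase-transition point in $s$ of the function $s \mapsto M^{\alpha}(K,s,\varepsilon,\varphi)$, followed by the limit $\varepsilon \to 0$. Items (1) and (2) are immediate from the definition: since $s$ ranges over $[0,\infty)$, every critical value $M^{\alpha}(K,\varepsilon,\varphi)$ lies in $[0,\infty)$, proving (1); and any admissible cover of $K_2$ by $\alpha$-Bowen balls is also an admissible cover of any $K_1 \subset K_2$, so $M^{\alpha}(K_1,s,\varepsilon,n,\varphi) \leq M^{\alpha}(K_2,s,\varepsilon,n,\varphi)$, a monotonicity which propagates through both limits.

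For item (3), the $\geq$ direction is just (2). For the reverse, I would first record the subadditivity inequality
\[
M^{\alpha}\bigl({\textstyle\bigcup_i} K_i,s,\varepsilon,n,\varphi\bigr) \;\leq\; \sum_i M^{\alpha}(K_i,s,\varepsilon,n,\varphi),
\]
obtained by concatenating, for each $i$, a countable cover of $K_i$ whose sum approximates $M^{\alpha}(K_i,s,\varepsilon,n,\varphi)$ to within $\eta 2^{-i}$, and letting $\eta \to 0$. Fix $s > \sup_i {\rm dim}_{BS}^{\alpha}(f,K_i,\varphi)$ and $\varepsilon > 0$; since $M^{\alpha}(K_i,\varepsilon,\varphi)$ is non-increasing in $\varepsilon$, one has $M^{\alpha}(K_i,\varepsilon,\varphi) \leq {\rm dim}_{BS}^{\alpha}(f,K_i,\varphi) < s$, hence $M^{\alpha}(K_i,s,\varepsilon,\varphi)=0$ for every $i$. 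The subadditivity above, together with monotonicity of $M^{\alpha}$ in $n$, forces $M^{\alpha}(\bigcup_i K_i,s,\varepsilon,\varphi)=0$, so $M^{\alpha}(\bigcup_i K_i,\varepsilon,\varphi) \leq s$; letting $\varepsilon \to 0$ and $s \downarrow \sup_i {\rm dim}_{BS}^{\alpha}(f,K_i,\varphi)$ closes the argument.

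For item (4), the pointwise sandwich $n_i m \leq S_{n_i}\varphi(y) \leq n_i M$ for $y \in X$ and $n_i \in \mathbb{N}$ yields, after taking suprema over $y$ in each $\alpha$-Bowen ball and exponentiating,
\[
M^{\alpha}(K,sM,\varepsilon,n,\mathbf{1}) \;\leq\; M^{\alpha}(K,s,\varepsilon,n,\varphi) \;\leq\; M^{\alpha}(K,sm,\varepsilon,n,\mathbf{1}),
\]
where $\mathbf{1}$ denotes the constant potential $1$. Passing to the limit $n \to \infty$ and comparing the transition points in $s$, while using that the critical value of $s \mapsto M^{\alpha}(K,cs,\varepsilon,\mathbf{1})$ is $M^{\alpha}(K,\varepsilon,\mathbf{1})/c$ for $c>0$, gives $M^{\alpha}(K,\varepsilon,\mathbf{1})/M \leq M^{\alpha}(K,\varepsilon,\varphi) \leq M^{\alpha}(K,\varepsilon,\mathbf{1})/m$ for every $\varepsilon > 0$. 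The identification ${\rm dim}_{BS}^{\alpha}(f,K,\mathbf{1})=h_{top}^{\alpha,B}(f,K)$ from the remarks following the definition then produces the stated bounds as $\varepsilon \to 0$; the hypothesis $h_{top}^{\alpha,B}(f,K)<\infty$ ensures the upper bound is nontrivial.

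I do not expect a serious obstacle anywhere, as the argument mirrors the classical Carath\'eodory-Pesin construction. The only mildly delicate points are the $\eta 2^{-i}$ splitting in (3), which must be carried out on the premeasure $M^{\alpha}(\cdot,s,\varepsilon,n,\varphi)$ for fixed $n$ before invoking the $n \to \infty$ limit, and the careful tracking of the scaling $s \mapsto cs$ at the level of critical values in (4) to avoid misplacing a factor of $m$ or $M$.
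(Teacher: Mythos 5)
Your proof is correct. The paper in fact states Proposition \ref{p22} without any proof, treating these properties as routine consequences of the Carath\'eodory--Pesin construction, and your argument supplies precisely the standard verification it leaves implicit: monotonicity and countable subadditivity of the premeasures (with the $\eta 2^{-i}$ splitting done at fixed $n$) for (1)--(3), and the sandwich $n_i m\le S_{n_i}\varphi\le n_i M$ together with the rescaling of the critical parameter and the identification ${\rm dim}_{BS}^{\alpha}(f,K,\mathbf{1})=h_{top}^{\alpha,B}(f,K)$ for (4). Two small points are worth writing out. In (3), the monotonicity of $M^{\alpha}(K_i,\varepsilon,\varphi)$ in $\varepsilon$, which you need so that $M^{\alpha}(K_i,\varepsilon,\varphi)\le {\rm dim}_{BS}^{\alpha}(f,K_i,\varphi)<s$ holds for a single fixed $\varepsilon$ uniformly over the countably many $i$, deserves a one-line justification: enlarging $\varepsilon$ enlarges each ball, hence increases $\sup_{y\in B_{n_i}^{\alpha}(x_i,\varepsilon)}S_{n_i}\varphi(y)$, and since $s\ge 0$ and $\varphi>0$ this decreases the weights, so the premeasure, and therefore the critical value, is non-increasing in $\varepsilon$; you also use tacitly that $s$ strictly above the critical value forces $M^{\alpha}(K_i,s,\varepsilon,\varphi)=0$, which follows from monotonicity of the weights in $s$. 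In (4), the statement allows $\varphi\ge 0$, so $m$ may vanish, in which case the upper bound is vacuous and only the lower-bound half of your sandwich is needed; note also that the hypothesis $h_{top}^{\alpha,B}(f,K)<\infty$ is not actually used in the inequalities, only in making them informative.
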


\section{Proofs of main results}\label{sec:est}

In this section, we prove Theorems  $\ref{thm 1.1}$ and $\ref{thm 1.2}$.

\subsection{Bowen's equation for $\alpha$-BS  dimension}

In \cite{pp84}, Pesin defined the topological pressure on subsets. Using $\alpha$-Bowen balls, we continue to define the so-called $\alpha$-Pesin topological pressure on subsets, and link it to $\alpha$-BS dimension by a Bowen's equation.

Let $K\subset X$. For any $\alpha\geq 0$, $s\in \mathbb{R}$, $n\in\mathbb N$, $\varepsilon >0$, and the continuous function $h:X\rightarrow \mathbb{R}$, we define
$$\mathcal{M}_{B}^{\alpha}(K,s,\varepsilon,n,h)=\inf\left\{\sum_{i}\exp(-sn_{i}+\sup_{y\in B_{n_{i}}^{\alpha}(x_i,\varepsilon)} S_{n_i}h(y))\right\},$$
where the infimum is taken over all finite or countable families $\{B_{n_{i}}^{\alpha}(x_i,\varepsilon)\}$ with $x_i\in X$, $n_i\geq n$ such that $K\subset \bigcup_{i}B_{n_{i}}^{\alpha}(x_i,\varepsilon)$.

It is easy to see that $\mathcal{M}_{B}^{\alpha}(K,s,\varepsilon,n,h)$ is a finite outer measure on $X$ and increases as $n$ increases. Define
$$\mathcal{M}_{B}^{\alpha}(K,s,\varepsilon,h)=\lim_{n\rightarrow\infty}\mathcal{M}_{B}^{\alpha}(K,s,\varepsilon,n,h)$$
and
$$\mathcal{M}_{B}^{\alpha}(K,\varepsilon,h)=\inf\{s:\mathcal{M}_{B}^{\alpha}(K,s,\varepsilon,h)=0\}=\sup\{s:\mathcal{M}_{B}^{\alpha}(K,s,\varepsilon,h)=\infty\}.$$

\begin{defn}
The $\alpha$-Pesin topological pressure of $h$ on $K$ is defined by  $$P^{\alpha}_{B}(f,K,h)=\lim_{\varepsilon\rightarrow 0}\mathcal{M}_{B}^{\alpha}(K,\varepsilon,h).$$
\end{defn}

\begin{rem}
If $\alpha =0$, the above definition is reduced to the  Pesin  topological pressure \cite{ps07}. Additionally, if $h$ is a zero potential, it is reduced to  the $\alpha$-Bowen topological entropy.
\end{rem}

It is straightforward to show the $\alpha$-Pesin topological pressure possesses the following elementary properties:
\begin{pro} \label{p23}

(1) if $K_{1}\subset K_{2}\subset X$, then $P_{B}^{\alpha}(f,K_{1},h)\leq P_{B}^{\alpha}(f,K_{2},h)$;

(2) if $K_{i}\subset X$ for $i=1,2,\ldots,$ then $P_{B}^{\alpha}(f,\bigcup_{i=1}^{\infty} K_{i},h)=\sup_{i\geq 1} P_{B}^{\alpha}(f,K_{i},h)$;

(3)  $P^{\alpha}_{B}(f,K,h) >-\infty$ for all $h \in C(X,\mathbb{R})$. Furthermore, it is bounded by $\alpha$-Bowen topological entropy:
$$ h_{top}^{\alpha,B}(f,K)-||h||\leq P^{\alpha}_{B}(f,K,h)\leq  h_{top}^{\alpha,B}(f,K)+||h||;$$

(4) fix $\alpha \geq 0$ and $K\subset X$. If $h_{top}^{\alpha,B}(f,K)<\infty$, then
$$|P^{\alpha}_{B}(f,K,h_1)-P^{\alpha}_{B}(f,K,h_2)|\leq ||h_1-h_2||$$
for any $h_1,h_2\in C(X,\mathbb{R})$.
\end{pro}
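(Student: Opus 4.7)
All four properties propagate from inequalities between the pre-measures $\mathcal{M}_{B}^{\alpha}(K,s,\varepsilon,n,h)$: once the appropriate inequality is established at that level, the critical values $\mathcal{M}_{B}^{\alpha}(K,\varepsilon,h)$ inherit an analogous ordering, and letting $\varepsilon\rightarrow 0$ yields the conclusion. For (1), any cover of $K_2$ by $\alpha$-Bowen balls is also a cover of $K_1$, so the infimum defining $\mathcal{M}_{B}^{\alpha}(K_1,s,\varepsilon,n,h)$ is taken over a larger family, and hence is no greater than that for $K_2$. This inequality persists through the limits in $n$ and $\varepsilon$, giving the monotonicity of $P_{B}^{\alpha}$. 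For (2), the direction ``$\geq$'' follows from (1). For the reverse, I choose $s > \sup_{i}P_{B}^{\alpha}(f,K_i,h)$, fix a small $\varepsilon$, and juxtapose countable near-optimal covers of each $K_i$; the countable subadditivity of the outer measure $\mathcal{M}_{B}^{\alpha}(\,\cdot\,,s,\varepsilon,h)$ yields $\mathcal{M}_{B}^{\alpha}(\bigcup_i K_i,s,\varepsilon,h)\leq \sum_i \mathcal{M}_{B}^{\alpha}(K_i,s,\varepsilon,h)=0$, so $P_{B}^{\alpha}(f,\bigcup_i K_i,h)\leq s$, and $s$ may be taken arbitrarily close to the supremum.

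For (3), the uniform pointwise estimate $-n_i||h|| \leq S_{n_i}h(y) \leq n_i ||h||$ sandwiches each summand:
$$
\exp(-(s+||h||)n_i) \leq \exp\Bigl(-sn_i + \sup_{y\in B_{n_i}^{\alpha}(x_i,\varepsilon)} S_{n_i}h(y)\Bigr) \leq \exp(-(s-||h||)n_i).
$$
Taking infima over covers shows that $\mathcal{M}_{B}^{\alpha}(K,s,\varepsilon,n,h)$ is sandwiched between the corresponding pre-measures for the zero potential evaluated at $s+||h||$ and $s-||h||$, which after passing to critical values yields $h_{top}^{\alpha,B}(f,K,\varepsilon) - ||h|| \leq \mathcal{M}_{B}^{\alpha}(K,\varepsilon,h) \leq h_{top}^{\alpha,B}(f,K,\varepsilon) + ||h||$. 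Sending $\varepsilon\rightarrow 0$ produces the two-sided bound, and since $h_{top}^{\alpha,B}(f,K)\geq 0$, this already guarantees $P_{B}^{\alpha}(f,K,h)\geq -||h|| > -\infty$ for arbitrary $h\in C(X,\mathbb{R})$.

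For (4), the triangle inequality $|S_{n_i}h_1(y) - S_{n_i}h_2(y)|\leq n_i ||h_1-h_2||$ gives
$$
\sup_{y\in B_{n_i}^{\alpha}(x_i,\varepsilon)} S_{n_i}h_1(y) \leq n_i ||h_1-h_2|| + \sup_{y\in B_{n_i}^{\alpha}(x_i,\varepsilon)} S_{n_i}h_2(y),
$$
whence $\mathcal{M}_{B}^{\alpha}(K,s,\varepsilon,n,h_1)\leq \mathcal{M}_{B}^{\alpha}(K,s-||h_1-h_2||,\varepsilon,n,h_2)$, with a symmetric estimate swapping $h_1$ and $h_2$. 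Under the assumption $h_{top}^{\alpha,B}(f,K)<\infty$, property (3) ensures both pressures are finite, and tracking critical values gives $|\mathcal{M}_{B}^{\alpha}(K,\varepsilon,h_1) - \mathcal{M}_{B}^{\alpha}(K,\varepsilon,h_2)| \leq ||h_1-h_2||$; the limit $\varepsilon\rightarrow 0$ completes the Lipschitz estimate. The main technical point throughout is the passage from an inequality of the form $\mathcal{A}(s) \leq \mathcal{B}(s)$ between pre-measures to the corresponding ordering of their critical values, together with the observation that an argument shift $s\mapsto s\pm c$ in the pre-measure translates the critical value by the same amount; once this bookkeeping is in place, everything else reduces to routine limits.
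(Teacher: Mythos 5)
The paper itself offers no argument for this proposition (it is dismissed as "straightforward"), and your write-up supplies exactly the standard verification: monotonicity of the pre-measures under enlarging the admissible family of covers for (1), outer-measure subadditivity for (2), the pointwise sandwich $-n_i\|h\|\leq S_{n_i}h\leq n_i\|h\|$ for (3), and the shift-of-argument bookkeeping $\mathcal{M}_{B}^{\alpha}(K,s,\varepsilon,n,h_1)\leq \mathcal{M}_{B}^{\alpha}(K,s-\|h_1-h_2\|,\varepsilon,n,h_2)$ for (4); all of this is correct, and your use of the finiteness hypothesis in (4) to avoid an $\infty-\infty$ subtraction is the right touch. The one step I would ask you to justify explicitly is in (2): from $s>\sup_i P_{B}^{\alpha}(f,K_i,h)$ you conclude $\mathcal{M}_{B}^{\alpha}(K_i,s,\varepsilon,h)=0$ at a \emph{fixed} scale $\varepsilon$, which presumes $\mathcal{M}_{B}^{\alpha}(K_i,\varepsilon,h)\leq P_{B}^{\alpha}(f,K_i,h)$. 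Exact monotonicity of the critical value in $\varepsilon$ is not automatic here, because shrinking $\varepsilon$ shrinks the balls (making covers harder) but also shrinks the weights $\sup_{B}S_{n}h$, so the two effects compete. The standard repair is the modulus-of-continuity comparison: with $\gamma(\varepsilon)=\sup\{|h(x)-h(y)|:d(x,y)<\varepsilon\}$ one gets $\mathcal{M}_{B}^{\alpha}(K_i,\varepsilon,h)\leq P_{B}^{\alpha}(f,K_i,h)+\gamma(\varepsilon)$, uniformly in $i$ since $\gamma$ depends only on $h$; choosing $\varepsilon$ with $\gamma(\varepsilon)<s-\sup_i P_{B}^{\alpha}(f,K_i,h)$ makes your subadditivity step valid for all smaller $\varepsilon$, and then letting $\varepsilon\rightarrow0$ gives the claimed inequality. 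With that one line added, your proof is complete and is surely the argument the authors had in mind.
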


We are ready to prove that the two different pressure-like quantities can be related by an analogue of Bowen's equation.

\begin{proof}[Proof of Theorem \ref{thm 1.1}]
We divide the proof into two steps:

\emph{Step 1.} We show that $$\Phi(t)=P_{B}^{\alpha}(f,K,-t\varphi)$$ is a strictly decreasing continuous function on $\mathbb{R}$.

 Notice that $|P^{\alpha}_{B}(f,K,h_1)-P^{\alpha}_{B}(f,K,h_2)|\leq ||h_1-h_2||$ by Proposition \ref{p23}. Then for  any $t_1,t_2  \in \mathbb{R}$, we have
$$||\Phi(t_1)-\Phi(t_2)||\leq ||\varphi||\cdot |t_1-t_2|.$$ This ensures the continuity of $\Phi$.

Let $m=\min_{x\in X}\varphi(x)>0$. For any $\varepsilon>0$, $t\in \mathbb{R}$ and $l>0$, we have
$$\begin{aligned}
&\mathcal{M}_{B}^{\alpha}(K,s,\varepsilon,n,-(t+l)\varphi)\\
=&\inf\left\{\sum_{i}\exp(-sn_{i}+\sup_{y\in B_{n_{i}}^{\alpha}(x_i,\varepsilon)}-(t+l)S_{n_i}\varphi(y))\right\}\\
\leq& \inf\left\{\sum_{i}\exp(-sn_{i}+\sup_{y\in B_{n_{i}}^{\alpha}(x_i,\varepsilon)}-tS_{n_i}\varphi(y)+\sup_{y\in B_{n_{i}}^{\alpha}(x_i,\varepsilon)}-lS_{n_i}\varphi(y))\right\}\\
\leq &\inf\left\{\sum_{i}\exp(-(s+ml)n_{i}+\sup_{y\in B_{n_{i}}^{\alpha}(x_i,\varepsilon)} -t S_{n_i}\varphi(y))\right\}\\
=&\mathcal{M}_{B}^{\alpha}(K,s+ml,\varepsilon,n,-t\varphi).
\end{aligned}$$
This implies that
$$\begin{aligned}
\mathcal{M}_{B}^{\alpha}(K,\varepsilon,-(t+l)\varphi)\leq \mathcal{M}_{B}^{\alpha}(K,\varepsilon,-t\varphi) -ml.
\end{aligned}$$
Letting $\varepsilon\rightarrow0$, we obtain 
$$P^{\alpha}_{B}(f,K,-(t+l)\varphi)<P^{\alpha}_{B}(f,K,-t\varphi).$$
Therefore, $\Phi(t)$ strictly decreases as $t$ increases.

\emph{Step 2.} We show that ${\rm dim}_{BS}^{\alpha}(f,K,\varphi)$ is the unique root of the equation $\Phi(t)=0$.

Observe from the definitions that for any $s \in \mathbb{R}$,
$$\mathcal{M}_{B}^{\alpha}(K,0,\varepsilon,n,-s\varphi)=M^{\alpha}(K,s,\varepsilon,n,\varphi).$$
Therefore, letting $n\rightarrow\infty$, we have
$$\mathcal{M}_{B}^{\alpha}(K,0,\varepsilon,-s\varphi)=M^{\alpha}(K,s,\varepsilon,\varphi).$$
Now let $s<{\rm dim}_{BS}^{\alpha}(f,K,\varphi)$. Then there exists $\varepsilon_0>0$ such that for all $0<\varepsilon<\varepsilon_0$, we have $s<M^{\alpha}(K,\varepsilon,\varphi)$. Hence,
$$\mathcal{M}_{B}^{\alpha}(K,0,\varepsilon,-s\varphi)=M^{\alpha}(K,s,\varepsilon,\varphi)=\infty.$$
This shows that $P^{\alpha}_{B}(f,K,-s\varphi)\geq0$.  Letting  $s \to  {\rm dim}_{BS}^{\alpha}(f,K,\varphi)$ yields that
$$P^{\alpha}_{B}(f,K,-{\rm dim}_{BS}^{\alpha}(f,K,\varphi) \cdot\varphi)\geq0.$$
The similar arguments give the reverse inequality $P^{\alpha}_{B}(f,K,-{\rm dim}_{BS}^{\alpha}(f,K,\varphi) \cdot\varphi)\leq0.$ Since $\Phi(t)$ is strictly decreasing in $t$, so  ${\rm dim}_{BS}^{\alpha}(f,K,\varphi)$ is the unique root of the equation $\Phi(t)=0$.
\end{proof}

\begin{rem}
There is no geometric condition assumed on $f$ such as expanding or hyperbolic in Theorem \ref{thm 1.1}, thus the result is not geometric (Hausdorff or box dimension). By the above the process of Theorem 1.1, we know that the result follows because $\Phi(t)$ is continuous and strictly decreasing.
\end{rem}

\subsection{Variational principle for $\alpha$-BS dimension}

In this subsection, we establish the variational principle between $\alpha$-BS dimension and $\alpha$-local Brin-Katok entropy.

\subsubsection{Billingsley Type Theorem for $\alpha$-BS dimension}

Inspired by the classical Brin-Katok entropy \cite{bk83}, we introduce the notion of $\alpha$-local Brin-Katok entropy.

\begin{defn}
Let $\mu\in\mathcal{M}(X)$, $\alpha\geq0$ and $\varphi:X\rightarrow\mathbb{R}$ be a positive continuous function. The $\alpha$-local  Brin-Katok entropy of $\mu$ w.r.t. $\varphi$ is defined by
$${P}_{\mu}^{\alpha}(f,\varphi)=\int{{P}_{\mu}^{\alpha}(f,x,\varphi)d\mu(x)},$$
where
$${P}_{\mu}^{\alpha}(f,x,\varphi)=\lim_{\varepsilon\rightarrow0}\liminf_{n\rightarrow\infty}
-\frac{\log\mu(B_{n}^{\alpha}(x,\varepsilon))}{ S_{n}\varphi(x)}.$$
\end{defn}
The following Billingsley-type theorem illustrates  how the $\alpha$-BS dimension of the subsets of $X$ is determined by the $\alpha$-local  Brin-Katok entropy of Borel probability measures on $X$.

We invoke a 3r-covering lemma for our forthcoming proof.

\begin{lem}\cite{ZC}\label{l32}
Let $\alpha\geq0$, $\varepsilon>0$, $\mathcal{B}^{\alpha}(\varepsilon)=\{B_{n}^{\alpha}(x,\varepsilon):x\in X, n=1,2,\ldots\}$. For any family $\mathcal{F}\subset\mathcal{B}^{\alpha}(\varepsilon)$, there exists a subfamily $\mathcal{G}\subset\mathcal{F}$ consisting of disjoint balls such that
$$\bigcup_{B\in\mathcal{F}}B\subset\bigcup_{B_{n}^{\alpha}(x,\varepsilon)\in\mathcal{G}} B_{n}^{\alpha}(x,3\varepsilon).$$
\end{lem}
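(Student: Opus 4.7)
The plan is to mimic the classical Vitali-type $3r$-covering argument, treating $\alpha$-Bowen balls of smaller length $n$ as ``larger'' than those of greater length, since $B_{n+1}^{\alpha}(x,\varepsilon)\subset B_{n}^{\alpha}(x,\varepsilon)$. Writing $\mathcal{F}_n:=\{B_n^{\alpha}(x,\varepsilon)\in\mathcal{F}\}$, I would build $\mathcal{G}=\bigcup_{n\geq 1}\mathcal{G}_n$ inductively: at stage $n$, take $\mathcal{G}_n$ to be a maximal (via Zorn's lemma) pairwise disjoint subfamily of those $B\in\mathcal{F}_n$ that are also disjoint from every ball already placed in $\mathcal{G}_1\cup\cdots\cup\mathcal{G}_{n-1}$. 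Then $\mathcal{G}$ will automatically be pairwise disjoint: elements within a single $\mathcal{G}_n$ are disjoint by the Zorn selection, and elements drawn from different levels are disjoint by the residual condition imposed at the later level.

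Next, I would show that every $B\in\mathcal{F}$ meets some member of $\mathcal{G}$ of length no greater than that of $B$. Fix $B=B_m^{\alpha}(y,\varepsilon)\in\mathcal{F}$. If $B$ is disjoint from every ball in $\mathcal{G}_1\cup\cdots\cup\mathcal{G}_{m-1}$, then $B$ lies in the pool from which $\mathcal{G}_m$ is chosen, so by maximality it either belongs to $\mathcal{G}_m$ or intersects some element of $\mathcal{G}_m$; otherwise $B$ already intersects some $B_l^{\alpha}(z,\varepsilon)\in\mathcal{G}_l$ with $l<m$. In either case, there exists $B_l^{\alpha}(z,\varepsilon)\in\mathcal{G}$ with $l\leq m$ that meets $B$.

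The geometric step is to verify the inflation inclusion $B_m^{\alpha}(y,\varepsilon)\subset B_l^{\alpha}(z,3\varepsilon)$ whenever $l\leq m$ and $B_m^{\alpha}(y,\varepsilon)\cap B_l^{\alpha}(z,\varepsilon)\neq\emptyset$. Choosing $w$ in the intersection and an arbitrary $u\in B_m^{\alpha}(y,\varepsilon)$, for every $0\leq i\leq l-1\leq m-1$ the ordinary triangle inequality in $(X,d)$ gives
$$e^{\alpha i}d(f^iz,f^iu)\leq e^{\alpha i}d(f^iz,f^iw)+e^{\alpha i}d(f^iw,f^iy)+e^{\alpha i}d(f^iy,f^iu),$$
and the three summands are bounded respectively by $d_l^{\alpha}(z,w)<\varepsilon$, $d_m^{\alpha}(y,w)<\varepsilon$, and $d_m^{\alpha}(y,u)<\varepsilon$. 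Taking the maximum over $i\in\{0,\dots,l-1\}$ yields $d_l^{\alpha}(z,u)<3\varepsilon$, so $u\in B_l^{\alpha}(z,3\varepsilon)$. Combining this with the previous paragraph gives $\bigcup_{B\in\mathcal{F}}B\subset\bigcup_{B_n^{\alpha}(x,\varepsilon)\in\mathcal{G}}B_n^{\alpha}(x,3\varepsilon)$, as required.

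I expect the transfinite Zorn-type selection to be essentially routine. The main obstacle will be the geometric $3\varepsilon$-inflation, since $\alpha$-Bowen balls are not balls in the underlying metric $d$ and there is no a priori notion of ``triple radius''. What makes the argument work is precisely that when $l\leq m$ the $l$ coordinates controlling $d_l^{\alpha}$ are also controlled by $d_m^{\alpha}$, so the standard triangle inequality transfers across different lengths without any loss beyond the factor of three.
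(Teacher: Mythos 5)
Your proof is correct: the level-by-level maximal disjoint selection (ordering $\alpha$-Bowen balls by length, with smaller $n$ meaning a larger ball) together with the triangle inequality over the first $l\leq m$ coordinates is exactly the standard $3\varepsilon$-covering argument for Bowen-type balls, which is the argument of the cited source; the paper itself states the lemma by reference to \cite{ZC} without reproducing a proof. No gaps: the maximality step and the inflation estimate $d_l^{\alpha}(z,u)<3\varepsilon$ are both handled correctly.
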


\begin{thm}\label{t33}
Let $\mu\in\mathcal{M}(X)$, $K\subset X$ be a Borel set, $\alpha\geq0$ and $\varphi:X\rightarrow\mathbb{R}$ be a positive continuous function. For each $s\in (0,\infty)$, the following statements hold:

$(1)$ if ${P}_{\mu}^{\alpha}(f,x,\varphi)\leq s$ for all $x\in K$, then ${\rm dim}_{BS}^{\alpha}(f,K,\varphi)\leq s$,

$(2)$ if ${P}_{\mu}^{\alpha}(f,x,\varphi)\geq s$ for all $x\in K$ and $\mu(K)>0$, then ${\rm dim}_{BS}^{\alpha}(f,K,\varphi)\geq s$.
\end{thm}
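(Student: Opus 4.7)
The plan is to work throughout with the equivalent formulation $\widetilde{\rm dim}_{BS}^{\alpha} = {\rm dim}_{BS}^{\alpha}$ established in the preceding proposition, since evaluating $S_n\varphi$ at the centers of $\alpha$-Bowen balls (rather than the supremum over the ball) makes the Billingsley-type estimates much cleaner. Both directions adapt the classical Billingsley scheme to the $\alpha$-Bowen geometry, with the $3r$-covering Lemma \ref{l32} playing the role of the Vitali lemma and the modulus of continuity of $\varphi$ absorbing the geometric error introduced when one has to move between nearby centers.

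For (1), I would fix $\delta>0$ and small $\varepsilon>0$. Monotonicity in $\varepsilon$ of $\liminf_n -\log\mu(B_n^\alpha(x,\varepsilon))/S_n\varphi(x)$, combined with the hypothesis $P_\mu^\alpha(f,x,\varphi)\leq s$, gives for every $x\in K$ and every $N\in\mathbb N$ an integer $n(x)\geq N$ such that $\mu(B_{n(x)}^\alpha(x,\varepsilon)) > \exp(-(s+\delta)S_{n(x)}\varphi(x))$. I would then apply Lemma \ref{l32} to $\{B_{n(x)}^\alpha(x,\varepsilon)\}_{x\in K}$ to extract a disjoint subfamily $\{B_{n_i}^\alpha(x_i,\varepsilon)\}_i$ whose $3\varepsilon$-dilations still cover $K$, and use disjointness to estimate
\[
\sum_i \exp(-(s+\delta)S_{n_i}\varphi(x_i)) \leq \sum_i \mu(B_{n_i}^\alpha(x_i,\varepsilon)) \leq 1,
\]
so $M^{\alpha,1}(K,s+\delta,3\varepsilon,N,\varphi)\leq 1$ uniformly in $N$. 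Sending $\varepsilon\to 0$ and then $\delta\to 0$ yields ${\rm dim}_{BS}^{\alpha}(f,K,\varphi)\leq s$.

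For (2), I would first stratify $K=\bigcup_M K_M$ with $K_M:=\{x\in K:\liminf_n -\log\mu(B_n^\alpha(x,1/M))/S_n\varphi(x)>s-\delta\}$, pick $M_0$ with $\mu(K_{M_0})>0$, and then decompose $K_{M_0}$ according to the first index $N_0$ such that $\mu(B_n^\alpha(x,1/M_0))\leq\exp(-(s-\delta)S_n\varphi(x))$ for all $n\geq N_0$, obtaining a Borel subset $F\subset K$ of positive $\mu$-measure enjoying a uniform exponential decay of measure. For any countable cover of $F$ by $\alpha$-Bowen balls $B_{n_i}^\alpha(x_i,\varepsilon)$ with $2\varepsilon<1/M_0$ and $n_i\geq N_0$, I would pick $y_i\in F\cap B_{n_i}^\alpha(x_i,\varepsilon)$ whenever this intersection is nonempty, use the triangle inequality for $d_n^\alpha$ to obtain $B_{n_i}^\alpha(x_i,\varepsilon)\subseteq B_{n_i}^\alpha(y_i,1/M_0)$ and hence $\mu(B_{n_i}^\alpha(x_i,\varepsilon))\leq \exp(-(s-\delta)S_{n_i}\varphi(y_i))$, and then invoke the uniform continuity of $\varphi$ together with $\varphi\geq m>0$ to replace $S_{n_i}\varphi(y_i)$ by $(1-\gamma(\varepsilon)/m)S_{n_i}\varphi(x_i)$, at the cost of absorbing the resulting multiplicative error into a further $\delta$. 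Summing over $i$ and using that the balls cover $F$ gives $\mu(F)\leq\sum_i\exp(-(s-2\delta)S_{n_i}\varphi(x_i))$, so $M^{\alpha,1}(F,s-2\delta,\varepsilon,N_0,\varphi)\geq\mu(F)>0$; passing $\varepsilon,\delta\to 0$ and using Proposition \ref{p22}(2) then delivers ${\rm dim}_{BS}^{\alpha}(f,K,\varphi)\geq s$. The main obstacle is precisely this asymmetry in (2): the measure decay bound is available only at points of $F$, while the $\alpha$-Bowen balls in a general cover of $F$ have arbitrary centers in $X$, and the two-stage stratification (in $M$ and in $N_0$) followed by the triangle-inequality/uniform-continuity transfer is what makes the parameters uniform on a positive-measure set and shifts the bound from the nearby $y_i\in F$ to the actual center $x_i$ that appears in the sum.
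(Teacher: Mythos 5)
Your proposal is correct and follows essentially the same Billingsley-type scheme as the paper: the $3r$-covering Lemma \ref{l32} together with disjointness and $\mu(X)=1$ for part (1), and the two-stage stratification (first in the radius $1/M$, then in the threshold index $N$) followed by transferring the measure-decay bound from a point of the positive-measure set to the actual center of each covering ball for part (2). Your explicit uniform-continuity correction $(1-\gamma(\varepsilon)/m)$ when moving $S_{n_i}\varphi$ from $y_i\in F$ to the center $x_i$, and your streamlining of part (1) without the $K_m$ stratification (justified by monotonicity in $\varepsilon$), are minor refinements of the paper's argument, which passes over that weight transfer in its final inequality.
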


\begin{proof}
(1).  Given $\beta>0$, let $$K_{m}=\left\{x\in K:\liminf_{n\rightarrow\infty}\frac{-\log\mu(B_{n}^{\alpha}(x,\varepsilon))}{ S_{n}\varphi(x)}<s+\beta,\ \text{for any}\ \varepsilon\in(0,\frac{1}{m})\right\}.$$
Since ${P}_{\mu}^{\alpha}(f,x,\varphi)\leq s$ for all $x\in K$, then $K=\bigcup_{m=1}^{\infty}K_{m}$. Fix $m\geq1$ and $\varepsilon\in(0,\frac{1}{3m})$. For any $x\in K_m$, there exists a strictly increasing sequence $\{n_{j}(x)\}_{j=1}^{\infty}$ such that
$$\mu(B_{n_{j}(x)}^{\alpha}(x,\varepsilon))\geq\exp{\left(-(s+\beta)S_{n_j}\varphi(x)\right)}\ \text{for all}\ j\geq1.$$
Put $\mathcal{F}_{N}=\{B_{n_{j}(x)}^{\alpha}(x,\varepsilon):x\in K_{m}, n_{j}(x)\geq N\}$. It is obvious that $K_{m}\subset \bigcup_{B\in\mathcal{F}_{N}}B.$ Using Lemma \ref{l32}, there exists a subfamily $\mathcal{F}_{N}^{'}=\{B_{n_{i}}^{\alpha}(x_{i},\varepsilon)\}_{i\in I}\subset\mathcal{F}_{N}$ consisting of pairwise disjoint balls such that for all $i\in I$,
$$K_{m}\subset\bigcup_{i\in {I}}B_{n_{i}}^{\alpha}(x_{i},3\varepsilon).$$

Notice that
$$\mu(B_{n_{i}}^{\alpha}(x_{i},\varepsilon))\geq\exp{\left(-(s+\beta) S_{n_i}\varphi(x_i)\right)}>0\ \text{for every}\ i\in {I}.$$
So  the index set $\mathcal{I}$ is at most countable. It follows that
$$M^{\alpha}(K_{m},s+\beta,3\varepsilon,N,\varphi)\leq\sum_{i\in {I}}\exp{\left(-(s+\beta) S_{n_i}\varphi(x_i)\right)}
\leq\sum_{i\in {I}}\mu(B_{n_{i}}^{\alpha}(x_{i},\varepsilon))\leq1.$$
This implies that ${\rm dim}_{BS}^{\alpha}(f,K_m,\varphi)\leq s+\beta$ for any $m\geq1$. By Proposition \ref{p22}, we get $${\rm dim}_{BS}^{\alpha}(f,K,\varphi)=\sup_{m\geq1}{\rm dim}_{BS}^{\alpha}(f,K_m,\varphi)\leq s+\beta.$$
Therefore, we have ${\rm dim}_{BS}^{\alpha}(f,K,\varphi)\leq s$ by the arbitrariness of $\beta$.

(2) Fix $\beta>0$. For any $m\geq1$, let
$$K_{m}=\left\{x\in K:\liminf_{n\rightarrow\infty}\frac{-\log\mu(B_{n}^{\alpha}(x,\varepsilon))}{ S_{n}\varphi(x)}>s-\beta,\ \text{for any}\ \varepsilon\in(0,\frac{1}{m}]\right\}.$$
Since $\frac{-\log\mu(B_{n}^{\alpha}(x,\varepsilon))}{ S_{n}\varphi(x)}$ is non-decreasing  as $\varepsilon$ decreases, then
$$K_{m}=\left\{x\in K:\liminf_{n\rightarrow\infty}\frac{-\log\mu(B_{n}^{\alpha}(x,\frac{1}{m}))}{ S_{n}\varphi(x)}>s-\beta \right\}.$$
Thus, $K_{m}\subset K_{m+1}$ and $\bigcup_{m=1}^{\infty}K_m=K$. For  sufficiently large $M\geq1$, we have $\mu(K_{M})>\frac{1}{2}\mu(K)>0$. For each $N\geq1$, set
$$\begin{aligned}
K_{M,N}
&:=\left\{x\in K_M:\frac{-\log\mu(B_{n}^{\alpha}(x,\varepsilon))}{ S_{n}\varphi(x)}>s-\beta\ \text{for any}\ n\geq N, \varepsilon\in(0,\frac{1}{M}]\right\}\\
&=\left\{x\in K_M:\frac{-\log\mu(B_{n}^{\alpha}(x,\frac{1}{M}))}{ S_{n}\varphi(x)}>s-\beta\ \text{for any}\ n\geq N\right\}.
\end{aligned}$$
Clearly, $K_{M,N}\subset K_{M,N+1}$ and $\bigcup_{N=1}^{\infty}K_{M,N}=K_{M}$. Similarly, we can take an $N^{*}\geq1$ such that $\mu(K_{M,N^{*}})>\frac{1}{2}\mu(K_{M})>0$. Then for any $x\in K_{M,N^{*}}$,  we have
$$\mu(B_{n}^{\alpha}(x,\varepsilon))\leq\exp\left({-(s-\beta)} S_{n}\varphi(x)\right)$$
 for all $n\geq N^{*}$ and $0<\varepsilon\leq \frac{1}{M}$.

Fix $N\geq N^{*}$. Without loss of generality, assume that $\{B_{n_{i}}^{\alpha}(y_{i},\frac{\varepsilon}{2})\}$ is a finite or countable family such that  $y_i\in X,n_i\geq N$, $\bigcup_{i}B_{n_{i}}^{\alpha}(y_{i},\frac{\varepsilon}{2})\supseteq K_{M,N^{*}}$ and  $$K_{M,N^{*}}\bigcap B_{n_{i}}^{\alpha}(y_{i},\frac{\varepsilon}{2})\neq\emptyset,\ \text{for all}\ i\geq1,$$
where $ 0<\varepsilon<\frac{1}{M}.$
For any $i\geq1$, take $x_{i}\in K_{M,N^{*}}\bigcap B_{n_{i}}^{\alpha}(y_{i},\frac{\varepsilon}{2})$ such that
$$B_{n_{i}}^{\alpha}(y_{i},\frac{\varepsilon}{2})\subset B_{n_{i}}^{\alpha}(x_{i},\varepsilon).$$
Thus
$$0<\mu(K_{M,N^{*}})\leq\sum_{i\geq1}\mu(B_{n_{i}}^{\alpha}(x_{i},\varepsilon))\leq
\sum_{i\geq1}\exp\left({-(s-\beta)}S_{n_i}\varphi(x_i)\right).$$
Then
$$M^{\alpha}(K_{M,N^{*}},s-\beta,\frac{\varepsilon}{2},N,\varphi)\geq\mu(K_{M,N^{*}})>0,$$
which implies that
$${\rm dim}_{BS}^{\alpha}(f,K,\varphi)\geq {\rm dim}_{BS}^{\alpha}(f,K_{M,N^{*}},\varphi)\geq s-\beta.$$
The arbitrariness of $\beta$ implies that ${\rm dim}_{BS}^{\alpha}(f,K,\varphi)\geq s$.
\end{proof}

In the following, we connect the $\alpha$-BS dimension to the $\alpha$-local Brin-Katok entropy of $\mu$ by the variational principle on  compact subsets.   We first present the notion of weighted $\alpha$-BS dimension, which plays a crucial role in the proof of Theorem \ref{thm 1.2}.

\subsubsection{Weighted $\alpha$-BS dimension}

Let $\alpha\geq0$ and $\varphi:X\rightarrow\mathbb{R}$ be a positive continuous function. For any $n\in\mathbb{N}$, $s\in\mathbb{R}$, $\varepsilon>0$ and a  bounded function $\xi:X\rightarrow\mathbb{R}$, we define
$$W^{\alpha}(\xi,s,\varepsilon,n,\varphi)=\inf\left\{\sum_{i}c_i\exp\left(-s \cdot  S_{n_i}\varphi(x_i)\right)\right\},$$
where the infimum is taken over all finite or countable collections $\{(B_{n_{i}}^{\alpha}(x_i,\varepsilon),c_{i})\}$ such that $x_{i}\in X$, $n_i\geq n$, $0<c_i<\infty$ and $$\sum_{i}c_{i}\chi_{B_{i}}\geq \xi,$$ where $B_{i}:=B_{n_{i}}^{\alpha}(x_i,\varepsilon)$ and $\chi_{B_{i}}$ denotes the characteristic function of $B_{i}$.

For $K\subset X$ and $\xi=\chi_K$, we let $ W^{\alpha}(K,s,\varepsilon,n,\varphi):=W^{\alpha}(\chi_K,s,\varepsilon,n,\varphi)$ and define
$$W^{\alpha}(K,s,\varepsilon,\varphi)=\lim_{n\rightarrow\infty}W^{\alpha}(K,s,\varepsilon,n,\varphi).$$
There exists a critical value of the parameter $s$ such that $W^{\alpha}(K,s,\varepsilon,\varphi)$ jumps from $\infty$ to $0$, which we  denote by $W^{\alpha}(K,\varepsilon,\varphi)$, that is
$$W^{\alpha}(K,\varepsilon,\varphi)=\inf\{s:W^{\alpha}(K,s,\varepsilon,\varphi)=0\}=\sup \{s:W^{\alpha}(K,s,\varepsilon,\varphi)=\infty\}.$$

\begin{defn}
The weighted $\alpha$-BS   dimension of $K$ is defined by $${\rm dim}_{BS}^{\alpha,W}(f,K,\varphi)=\lim_{\varepsilon\rightarrow0}W^{\alpha}(K,\varepsilon,\varphi).$$
\end{defn}

We show that the weighted $\alpha$-BS dimension coincides with the  $\alpha$-BS dimension by the classical 5r-covering lemma. \begin{lem}\label{l35}\cite[Theorem 2.1]{PM}
Let $(X,d)$ be a compact metric space and $\mathcal{B}=\{B(x_i,r_i)\}_{i\in\mathcal {I}}$ be a family of closed (or open) balls in $X$. Then there exists a finite or countable subfamily $\mathcal{B^{'}}=\{B(x_i,r_i)\}_{i\in\mathcal {I^{'}}}$ of pairwise disjoint balls in $\mathcal{B}$ such that
$$\bigcup_{B\in\mathcal{B}}B\subset \bigcup_{i\in\mathcal{I^{'}}}B(x_i,5r_i).$$
\end{lem}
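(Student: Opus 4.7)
The plan is to prove the classical $5r$-covering lemma via a greedy selection argument stratified by radius. Since $X$ is compact, I may assume $R:=\sup_{i\in\mathcal{I}}r_i<\infty$ (if some $r_i$ already gives $B(x_i,r_i)\supset X$, the conclusion is trivial). I would then partition $\mathcal{B}=\bigcup_{n\geq 1}\mathcal{B}_n$ into scale classes
$$\mathcal{B}_n=\{B(x_i,r_i)\in\mathcal{B}:2^{-n}R<r_i\leq 2^{-n+1}R\}.$$

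Next, I would build the disjoint subfamily $\mathcal{B}'$ inductively by Zorn's lemma: let $\mathcal{G}_1\subset\mathcal{B}_1$ be a maximal pairwise disjoint subfamily, and having chosen $\mathcal{G}_1,\dots,\mathcal{G}_{n-1}$, let $\mathcal{G}_n\subset\mathcal{B}_n$ be a maximal subfamily whose members are pairwise disjoint and each disjoint from every ball already selected. Set $\mathcal{B}':=\bigcup_{n\geq 1}\mathcal{G}_n$. Each $\mathcal{G}_n$ is in fact finite: its centers are pairwise more than $2^{-n}R$ apart (since for disjoint open balls $B(x_i,r_i),B(x_j,r_j)$ one has $d(x_i,x_j)\geq\max(r_i,r_j)>2^{-n}R$), and a compact metric space admits only finitely many such points. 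Hence $\mathcal{B}'$ is at most countable and pairwise disjoint by construction.

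It remains to check the $5r$-covering property. Fix $B(x,r)\in\mathcal{B}_n$. Maximality of $\mathcal{G}_n$ forces $B(x,r)$ to intersect some $B(y,s)\in\mathcal{G}_1\cup\cdots\cup\mathcal{G}_n$; say $B(y,s)\in\mathcal{G}_k$ with $k\leq n$. Then $s>2^{-k}R\geq 2^{-n}R\geq r/2$, so $r<2s$. Picking $z\in B(x,r)\cap B(y,s)$ and applying the triangle inequality gives $d(w,y)\leq d(w,x)+d(x,z)+d(z,y)<2r+s<5s$ for every $w\in B(x,r)$, so $B(x,r)\subset B(y,5s)$.

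I do not anticipate any real obstacle: the argument is classical, using only Zorn's lemma together with compactness (to keep each $\mathcal{G}_n$ finite so that $\mathcal{B}'$ is countable). The one bookkeeping point worth flagging is that maximality at scale $n$ must be enforced against the union $\mathcal{G}_1\cup\cdots\cup\mathcal{G}_n$, not just against $\mathcal{G}_n$ alone; otherwise a small ball might be nontrivially intersected by a much larger previously chosen ball and the $5$-factor would not suffice. Since the statement is quoted from \cite{PM}, in practice the author simply invokes that reference rather than reproving it.
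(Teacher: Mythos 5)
Your proof is correct. The paper itself gives no argument for this lemma---it is quoted directly from \cite[Theorem 2.1]{PM}---and your argument (dyadic radius classes, inductively chosen maximal disjoint subfamilies with maximality enforced against all previously selected balls, and the triangle-inequality estimate $d(w,y)\leq 2r+s<5s$ using $r<2s$) is exactly the standard proof in that reference, with compactness correctly used only to reduce to $\sup_i r_i<\infty$ and to make each scale class finite, hence the selected subfamily countable.
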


\begin{thm}\label{t34}
For $K\subset X$, $\alpha\geq0$ and $\varphi  \in C(X,\mathbb{R})$ with $\varphi >0$, we have $${\rm dim}_{BS}^{\alpha,W}(f,K,\varphi)={\rm dim}_{BS}^{\alpha}(f,K,\varphi).$$
\end{thm}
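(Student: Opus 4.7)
The plan is to prove the two inequalities separately.

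\textbf{The easy direction} ${\rm dim}_{BS}^{\alpha,W}(f,K,\varphi) \leq {\rm dim}_{BS}^{\alpha}(f,K,\varphi)$ follows directly from the definitions. For any family $\{B_{n_i}^{\alpha}(x_i,\varepsilon)\}_i$ covering $K$ with $n_i \geq n$, taking $c_i = 1$ gives a valid weighted family since $\sum_i \chi_{B_i} \geq \chi_K$; hence $W^{\alpha}(K,s,\varepsilon,n,\varphi) \leq M^{\alpha,1}(K,s,\varepsilon,n,\varphi)$. Letting $n \to \infty$ and then $\varepsilon \to 0$, and invoking the first proposition of Section~\ref{sec:pre} identifying $\widetilde{\rm dim}_{BS}^{\alpha}$ with ${\rm dim}_{BS}^{\alpha}$, yields this direction.

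\textbf{The hard direction} ${\rm dim}_{BS}^{\alpha,W}(f,K,\varphi) \geq {\rm dim}_{BS}^{\alpha}(f,K,\varphi)$ is proved by a Feng-Huang style covering argument adapted to the $\alpha$-Bowen setting. Let $s > {\rm dim}_{BS}^{\alpha,W}(f,K,\varphi)$ and fix $\beta > 0$. It suffices to show $M^{\alpha,1}(K, s+\beta, 3\varepsilon, \varphi) = 0$ for sufficiently small $\varepsilon$, which will give ${\rm dim}_{BS}^{\alpha}(f,K,\varphi) \leq s+\beta$ and, after $\beta \to 0$, the desired inequality. For any $\eta > 0$ pick a weighted family $\{(B_i, c_i)\}$ with $B_i = B_{n_i}^{\alpha}(x_i,\varepsilon)$, $n_i \geq n$, $\sum_i c_i \chi_{B_i} \geq \chi_K$ and $\sum_i c_i e^{-s S_{n_i}\varphi(x_i)} < \eta$; by truncation assume $c_i \in (0,1]$.

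The crux is a double dyadic partition: set $I_{n,k} := \{i : n_i = n,\, 2^{-(k+1)} < c_i \leq 2^{-k}\}$ for $n \geq N$, $k \geq 0$. From $\sum_{n,k} 1/((n-N+1)^2(k+1)^2) < \infty$ and the pointwise cover condition, for each $x \in K$ one finds a pair $(n(x),k(x))$ satisfying
\[
\#\{i \in I_{n(x),k(x)} : x \in B_i\} \geq \frac{C_0 \cdot 2^{k(x)}}{(n(x)-N+1)^2 (k(x)+1)^2}.
\]
Setting $K_{n,k} := \{x \in K : (n(x),k(x)) = (n,k)\}$ gives $K = \bigcup_{n,k} K_{n,k}$. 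For each $(n,k)$, the 3r-covering lemma (Lemma~\ref{l32}) applied to $\{B_i\}_{i \in I_{n,k}}$ extracts a pairwise disjoint subfamily $\mathcal{G}_{n,k}$ whose 3-dilations cover $K_{n,k}$, and therefore
\[
M^{\alpha,1}(K_{n,k}, s+\beta, 3\varepsilon, n, \varphi) \leq \sum_{i \in \mathcal{G}_{n,k}} e^{-(s+\beta) S_n\varphi(x_i)}.
\]
Using $c_i > 2^{-(k+1)}$ on $I_{n,k}$, the bound $S_n\varphi(x_i) \geq mn$ with $m = \min_{X}\varphi > 0$, and the multiplicity lower bound to control $|\mathcal{G}_{n,k}|$ against the weighted mass $\sum_{i \in I_{n,k}} c_i$ through the disjointness of $\mathcal{G}_{n,k}$, the right-hand side is bounded by a constant times $\sum_{i \in I_{n,k}} c_i e^{-s S_n\varphi(x_i)}$ multiplied by a polynomial factor in $(n,k)$ which is absorbed by $e^{-\beta m n}$ for $n$ large. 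Summing over $(n,k)$ and using countable subadditivity gives $M^{\alpha,1}(K, s+\beta, 3\varepsilon, n, \varphi) \leq C \eta$ for an $\eta$-independent $C$, and sending $\eta \to 0$, $\varepsilon \to 0$, $\beta \to 0$ in turn completes the proof.

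\textbf{The main obstacle} is the final estimate: the inflation factor $2^{k+1}$ that appears when passing from the weighted sum $c_i e^{-sS_n\varphi(x_i)}$ to the unweighted sum $e^{-sS_n\varphi(x_i)}$ on $I_{n,k}$ must be cancelled by the multiplicity-driven thinness of $\mathcal{G}_{n,k}$ together with the exponential decay $e^{-\beta m n}$. Calibrating the double dyadic partition so that these inflation and deflation factors cancel cleanly is the delicate step, closely following the Feng-Huang template \cite{fh12} for Bowen topological entropy and specialising to the present $\alpha$-weighted setting.
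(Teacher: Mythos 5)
The first inequality is fine, but the hard direction has a genuine gap, located exactly at the step you yourself flag as ``the main obstacle''. After the dyadic decomposition into blocks $I_{n,k}$ and the application of Lemma~\ref{l32}, you are left needing a bound of the form
$$\sum_{i\in\mathcal{G}_{n,k}}e^{-(s+\beta)S_n\varphi(x_i)}\ \leq\ C(n,k)\sum_{i\in I_{n,k}}c_i\,e^{-sS_n\varphi(x_i)},$$
with $C(n,k)$ absorbable, and you assert that the pointwise multiplicity lower bound $\#\{i\in I_{n,k}:x\in B_i\}\gtrsim 2^k/((n-N+1)^2(k+1)^2)$ forces the disjoint family $\mathcal{G}_{n,k}$ to be thin enough to cancel the inflation factor $2^{k+1}$ coming from $c_i>2^{-(k+1)}$. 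No such cancellation follows from Lemma~\ref{l32}: the $3r$-covering lemma controls only that the dilated balls cover, and gives no relation between the (unweighted) sum over the selected disjoint subfamily and the weighted mass of the block. The natural way to exploit the multiplicity is to charge each selected ball $B_j$, via a point $y_j\in B_j\cap K_{n,k}$, to the $\gtrsim 2^k a_{n,k}$ balls of $I_{n,k}$ containing $y_j$; the $2^k$ then indeed cancels $2^{-(k+1)}$, but the charges overlap, and the overlap count is the number of disjoint $\varepsilon$-balls (in $d_n^{\alpha}$) meeting a fixed $\varepsilon$-ball, i.e.\ a packing number of a $2\varepsilon$-ball at scale $\varepsilon$. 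In a general compact metric space there is no Besicovitch/doubling property, and this packing number typically grows like $e^{cn}$ with $c$ comparable to the entropy at scale $\varepsilon$, which cannot be absorbed by $e^{-\beta m n}$ for arbitrary small $\beta$. Moreover, your claim that a ``polynomial factor in $(n,k)$'' is absorbed by $e^{-\beta mn}$ is unjustified for the $k$-dependence, since the exponential decay is only in $n$. So the central estimate is asserted, not proved, and I do not see how to complete it along these lines without extra geometric hypotheses on $(X,d)$.

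Note also that the scheme you describe is not in fact the Feng--Huang template for this statement. Both Feng--Huang and the paper prove ${\rm dim}_{BS}^{\alpha,W}\geq{\rm dim}_{BS}^{\alpha}$ by the Federer--Mattila device (Lemma 8.16 of Mattila): fix the order $m$, approximate the weights $c_i$ by positive integers, apply the $5r$-covering lemma (Lemma~\ref{l35}) iteratively $h$ times to produce layers $\mathcal{B}_1,\dots,\mathcal{B}_h$ each of whose dilations covers $K_{m,k,t}$, and select the cheapest layer; this replaces any multiplicity/doubling input and yields $\sum_{B\in\mathcal{B}_{j_0}}e^{-sS_m\varphi(x_B)}\leq t^{-1}\sum_i c_ie^{-sS_m\varphi(x_i)}$ directly. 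One then passes to the sets $K_{m,t}$ with $t$ replaced by $m^{-2}t$ and sums over $m$ using $\sum_{m\geq n}m^{-2}<1$, the factor $e^{-\gamma m\delta}$ absorbing $m^2$, together with a compactness argument in the Hausdorff metric to upgrade the $5\varepsilon$-dilations to $6\varepsilon$-balls for the limit set of centers. If you want a correct proof, either reproduce that argument or supply a genuinely new mechanism replacing the missing bounded-overlap step; as written, the proof is incomplete.
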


\begin{proof}
The inequality ${\rm dim}_{BS}^{\alpha,W}(f,K,\varphi)\leq{\rm dim}_{BS}^{\alpha}(f,K,\varphi)$  is obvious by definitions.

Next, for obtaining the reverse inequality we prove that $$ M^{\alpha}(K,s+\delta,6\varepsilon,n,\varphi)\leq W^{\alpha}(K,s,\varepsilon,n,\varphi)$$ for sufficiently large $n$.

Assume that $n>2$ such that $m^{2}\exp(-\gamma m\delta)\leq1$ for  all $m\geq n$, where $\gamma=\min_{x\in X}\varphi(x)>0$. Let $\{(B_{n_{i}}^{\alpha}(x_{i},\varepsilon),c_i)\}_{i\in {I}}$ be a family such that  $x_i\in X$, $0<c_i<\infty$, $n_i\geq n$ and $\sum_{i\in {I}}c_{i}\chi_{B_{i}}\geq\chi_K$, where $B_{i}=B_{n_{i}}^{\alpha}(x_{i},\varepsilon)$. We need to show
\begin{align}\label{equ 3.1}
M^{\alpha}(K,s+\delta,6\varepsilon,n,\varphi)\leq\sum_{i\in {I}}c_{i}\exp\left(-s \cdot  S_{n_i}\varphi(x_i)\right).
\end{align}

Put $I_m=\{i\in {I}:n_{i}=m\}$ and ${I}_{m,k}=\{i\in {I}_{m}:i\leq k\}$ for any $m\geq n$ and $k\in\mathbb N$.  We write $B_{i}=B_{n_{i}}^{\alpha}(x_{i},\varepsilon)$, $5B_{i}=B_{n_{i}}^{\alpha}(x_{i},5\varepsilon)$ for $i\in {I}$. Without loss of generality, we  may assume that $B_i\neq B_j$ for $i\neq j$. For $t>0$, set
$$K_{m,t}=\left\{x\in K:\sum_{i\in {I}_{m}}c_{i}\chi_{B_{i}}(x)>t\right\},$$
$$K_{m,k,t}=\left\{x\in K:\sum_{i\in {I}_{m,k}}c_{i}\chi_{B_{i}}(x)>t\right\}.$$
We shall prove (\ref{equ 3.1}) by  the following three steps.

Step 1. For each $m\geq n$, $k\in\mathbb N$, and $t>0$, there exists a finite set ${J}_{m,k,t}\subset{I}_{m,k}$ such that the balls $B_i$, indexed by $i\in{J}_{m,k,t}$,  are pairwise disjoint, $K_{m,k,t}\subset\bigcup_{i\in{J}_{m,k,t}}5B_{i}$ and
$$\sum_{i\in {J}_{m,k,t}}\exp\left(-s \cdot  S_{m}\varphi(x_i)\right)
\leq\frac{1}{t}\sum_{i\in {I}_{m,k}}c_{i}\exp\left(-s \cdot  S_{m}\varphi(x_i)\right).$$
We adopt the proof from  Federer \cite{HF} and   Mattila \cite[Lemma 8.16]{PM} to   complete the proof. Since ${I}_{m,k}$ is finite, by approximating the $c_i$'s from above, we  may assume that each $c_i$ is a positive rational number, and  even is a positive integer  by  multiplying with a common denominator of these rational numbers. Let $h$ be the least integer with $h\geq t$. Denote $\mathcal{B}=\{B_i:i\in {I}_{m,k}\}$ and define $u:\mathcal{B}\rightarrow\mathbb{Z}$ by $u(B_i)=c_i$. Since $B_i\neq B_j$ for $i\neq j$, $u$ is well-defined.

By induction, we define  the integer-valued functions $v_0,v_1,\ldots,v_h$ on $\mathcal{B}$ and  the subfamilies $\mathcal{B}_1,\mathcal{B}_2,\ldots,\mathcal{B}_h$ of $\mathcal{B}$ starting with $v_0=u$. By Lemma \ref{l35}, there is a pairwise disjoint subfamily $\mathcal{B}_1$ of $\mathcal{B}$ such that $\bigcup_{B\in\mathcal{B}}B\subset\bigcup_{B\in\mathcal{B}_1}5B$ and $K_{m,k,t}\subset\bigcup_{B\in\mathcal{B}_1}5B$. Then, by repeatedly using Lemma \ref{l35}, for $j=1,2,\ldots, h$, we inductively define   disjoint subfamilies $\mathcal{B}_{j}$ of $\mathcal B$ such that
$$\mathcal{B}_{j}\subset\{B\in\mathcal{B}: v_{j-1}(B)\geq 1\},\ \ K_{m,k,t}\subset \bigcup_{B\in\mathcal{B}_j}5B$$
and the function $v_j$ such that
$$v_j(B)=\left\{
\begin{aligned}
v_{j-1}(B)-1,\ \text{for}\ B\in\mathcal{B}_{j}, \\
v_{j-1}(B),\ \text{for}\ B\in\mathcal{B}\backslash\mathcal{B}_{j}.
\end{aligned}
\right.$$
This is possible since for $j<h$,
$$K_{m,k,t}\subset\left\{x:\sum_{B\in\mathcal{B}:B\ni x}v_{j}(B)\geq h-j\right\},$$
whence every $x\in K_{m,k,t}$ belongs to some ball $B\in\mathcal{B}$ with $v_{j}(B)\geq 1$. Thus
$$\begin{aligned}
\sum_{j=1}^{h}\sum_{B\in\mathcal{B}_{j}}\exp\left(-s \cdot  S_{m}\varphi(x_B)\right)
&=\sum_{j=1}^{h}\sum_{B\in\mathcal{B}_{j}}(v_{j-1}(B)-v_{j}(B))\exp\left(-s \cdot  S_{m}\varphi(x_B)\right)\\
&\leq \sum_{B\in\mathcal{B}}\sum_{j=1}^{h}(v_{j-1}(B)-v_{j}(B))\exp\left(-s \cdot  S_{m}\varphi(x_B)\right)\\
&\leq\sum_{B\in\mathcal{B}}u(B)\exp\left(-s \cdot  S_{m}\varphi(x_B)\right)\\
&=\sum_{j\in{I}_{m,k}}c_{j}\exp\left(-s \cdot  S_{m}\varphi(x_j)\right).
\end{aligned}$$
Choose $j_0\in\{1,2,\ldots,h\}$ so that $\sum_{B\in\mathcal{B}_{j_0}}\exp\left(-s \cdot  S_{m}\varphi(x_B)\right)$ is the smallest term. Then
$$\begin{aligned}
\sum_{B\in\mathcal{B}_{j_0}}\exp\left(-s\cdot S_m\varphi(x_B)\right)
\leq\frac{1}{h}\sum_{j\in{I}_{m,k}}c_{j}\exp\left(-s \cdot  S_{m}\varphi(x_j)\right)\\
\leq\frac{1}{t}\sum_{j\in{I}_{m,k}}c_{j}\exp\left(-s \cdot  S_{m}\varphi(x_j)\right).
\end{aligned}$$
Thus ${J}_{m,k,t}=\{i\in {I}:B_{i}\in\mathcal{B}_{j_0}\}$ is as desired.

Step 2. For each $m\geq n$ and $t>0$, we have
$$M^{\alpha}(K_{m,t},s+\delta,6\varepsilon,n,\varphi)\leq
\frac{1}{m^{2}t}\sum_{i\in {I}_{m}}c_i\exp\left(-s \cdot  S_{m}\varphi(x_i)\right).$$

We may  assume that $K_{m,t}\neq\emptyset$; otherwise there is nothing left to prove. Since $K_{m,k,t}$ is monotonically increasing to $K_{m,t}$, then $K_{m,k,t}\neq\emptyset$ for sufficiently large $k$. Fix such $k$. By Step 1,  the  set ${J}_{m,k,t}\neq\emptyset$ when $k$ is large enough. Let $Z_{m,k,t}=\{x_i:i\in{J}_{m,k,t}\}$. Notice  that the space of all non-empty compact subset of $X$ is compact with respect to Hausdorff distance (cf. Federer \cite{HF} 2.10.21). Hence, there is a sub-sequence $\{k_j\}\subset\mathbb{N}$ and a non-empty compact subset $Z_{m,t}\subset X$ such that $Z_{m,k_{j},t}$ converges to $Z_{m,t}$ in the Hausdorff distance $d_H$ as $j\rightarrow\infty$. As any two points in $Z_{m,k,t}$ have a distance (with respect to $d_{m}^{\alpha}$) no less than $\varepsilon$, so do the points in $Z_{m,t}$. Hence, $Z_{m,t}$ is a finite set. Moreover, $\sharp(Z_{m,k_{j},t})=\sharp(Z_{m,t})$ when $j$ is large enough.  Choose $\delta <\frac{1}{3}\min_{x\not= y,x,y\in Z_{m,t}}d(x,y)$ such that for any $x,y\in X$ with $d(x,y)<\delta$, one has $d_m^{\alpha}(x,y)<\frac{\varepsilon}{2}$. Notice that for sufficiently large $j$,
$$d_H(Z_{m,k_j,t},Z_{m,t})<\delta,$$
and
$$K_{m,k_j,t}\subset\bigcup_{i\in {J}_{m,k_{j},t}}5B_{i}=\bigcup_{x\in Z_{m,k_{j},t}}B_{m}^{\alpha}(x,5\varepsilon) \subset\bigcup_{x\in Z_{m,t}}B_{m}^{\alpha}(x,5.5\varepsilon).$$
One has $K_{m,t}\subset\bigcup_{x\in Z_{m,k_j,t}}B_{m}^{\alpha}(x,6\varepsilon)$.
Therefore, by Step 1  we  obtain that
$$\begin{aligned}
M^{\alpha}(K_{m,t},s+\delta,6\varepsilon,n,\varphi)
&\leq\sum_{x\in Z_{m,k_j,t}}\exp\left(-(s+\delta)S_m \varphi (x)\right)\\
&\leq \frac{1}{\exp(\gamma m\delta)t}\sum_{i\in {I}_m}c_i\exp\left(-s \cdot  S_{m}\varphi(x_i)\right)\\
&\leq\frac{1}{m^{2}t}\sum_{i\in {I}_m}c_i\exp\left(-s \cdot  S_{m}\varphi(x_i)\right).
\end{aligned}$$
Step 3. For any $t\in (0,1)$, we have
$$M^{\alpha}(K,s+\delta,6\varepsilon,n,\varphi)
\leq\frac{1}{t}\sum_{i\in {I}}c_{i}\exp\left(-s \cdot  S_{n_i}\varphi(x_i)\right).$$
 Fix $t\in(0,1)$. Note that $\sum_{m=n}^{\infty}m^{-2}<1$. Then $$K\subset\bigcup_{m=n}^{\infty}K_{m,m^{-2}t}.$$ Hence, by Step 2 we get
$$\begin{aligned}
M^{\alpha}(K,s+\delta,6\varepsilon,n,\varphi)
\leq&\sum_{m=n}^{\infty}M^{\alpha}(K_{m,m^{-2}t},s+\delta,6\varepsilon,n,\varphi)\\
\leq&\sum_{m=n}^{\infty}\frac{1}{t}\sum_{i\in  {I}_{m}}c_{i}\exp\left(-s \cdot  S_{m}\varphi(x_i)\right)\\
=&\frac{1}{t}\sum_{i\in  I}c_{i}\exp\left(-s \cdot  S_{n_i}\varphi(x_i)\right),\\
\end{aligned}$$
which implies that $M^{\alpha}(K,s+\delta,6\varepsilon,n,\varphi)\leq\sum_{i\in {I}}c_{i}\exp\left(-s \cdot  S_{n_i}\varphi(x_i)\right)$ by letting $t \to 1$.
This completes the proof.
\end{proof}

\subsubsection{Frostman's lemma of  weighted $\alpha$-BS    dimension}

The  following  dynamical version of Frostman's lemma  allows us  to  estimate the   measure  of $\alpha$-Bowen balls.
\begin{lem}\label{l36}
Let $\alpha\geq0$, $s>0$, $n\in\mathbb{N}$, $\varepsilon>0$ and $\varphi:X\rightarrow\mathbb{R}$ be a positive continuous function.
Suppose that $K$ is a non-empty compact subset of $X$.  If $c:=W^{\alpha}(K,s,\varepsilon,n,\varphi)>0$, then there exists a Borel probability measure $\mu$ on $X$ with $\mu(K)=1$   satisfying for any $x\in X$ and  $m\geq n$, $$\mu(B_{m}^{\alpha}(x,\varepsilon))\leq\frac{1}{c}\exp\left(-s \cdot S_mf(x)\right).$$
\end{lem}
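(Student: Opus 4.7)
The plan is to construct $\mu$ via Hahn--Banach and Riesz representation, following the standard Frostman-lemma strategy of Howroyd and Feng--Huang. Define a sublinear functional on the cone $C_+(X)$ of nonnegative continuous functions by
$$L_0(f)=\inf\left\{\sum_i c_i\exp(-s\cdot S_{n_i}\varphi(x_i))\right\},$$
where the infimum runs over countable families $\{(B_{n_i}^\alpha(x_i,\varepsilon),c_i)\}$ with $x_i\in X$, $n_i\geq n$, $c_i>0$, and $\sum_i c_i\chi_{B_{n_i}^\alpha(x_i,\varepsilon)}(x)\geq f(x)$ for every $x\in K$. The key design choice is that domination is required only on $K$: this pins $L_0(1)=c=W^\alpha(K,s,\varepsilon,n,\varphi)$ by definition, and makes $L_0(f)=0$ whenever $f$ vanishes on $K$. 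Compactness of $K$ and openness of $\alpha$-Bowen balls make $L_0$ finite on bounded functions, and one verifies directly that $L_0$ is positively homogeneous, subadditive, and monotone in $f|_K$.

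Next, extend $L_0$ to a sublinear functional on all of $C(X)$ via
$$\tilde L(f):=L_0(f+\|f\|_\infty\cdot 1)-\|f\|_\infty\cdot c,$$
and apply Hahn--Banach to obtain a linear functional $\Lambda:C(X)\to\mathbb{R}$ dominated by $\tilde L$. Positivity of $\Lambda$ follows from $\Lambda(-f)\leq\tilde L(-f)=L_0(\|f\|_\infty\cdot 1-f)-\|f\|_\infty c\leq 0$ for $f\geq 0$, using monotonicity of $L_0$. The sandwich $\tilde L(1)=L_0(2\cdot 1)-c=c$ together with $\tilde L(-1)=L_0(0)-c=-c$ pins the normalization $\Lambda(1)=c$. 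Riesz representation then delivers a positive Borel measure $\nu$ of total mass $c$, and $\mu:=\nu/c$ is a Borel probability measure on $X$ satisfying the cone inequality
$$\int f\,d\mu\leq \frac{L_0(f)}{c}\qquad\text{for every }f\in C_+(X),$$
since $\tilde L(f)\leq L_0(f)$ on $C_+(X)$ by subadditivity of $L_0$.

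To conclude $\mu(K)=1$: for any compact $F\subset X\setminus K$, Urysohn's lemma produces $f\in C_+(X)$ with $f\geq \chi_F$ and $f\equiv 0$ on $K$; then $L_0(f)=0$ by construction, so the cone inequality gives $\int f\,d\mu=0$ and hence $\mu(F)=0$. Inner regularity of $\mu$ upgrades this to $\mu(X\setminus K)=0$. For the ball estimate, fix $m\geq n$ and $x\in X$; since $B_m^\alpha(x,\varepsilon)$ is open, pick continuous $g_k$ with $0\leq g_k\leq\chi_{B_m^\alpha(x,\varepsilon)}$ and $g_k\nearrow\chi_{B_m^\alpha(x,\varepsilon)}$. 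The trivial single-ball family $\{(B_m^\alpha(x,\varepsilon),1)\}$ dominates every $g_k$ on $K$, so $L_0(g_k)\leq\exp(-s\cdot S_m\varphi(x))$; the cone inequality and monotone convergence then yield the desired bound $\mu(B_m^\alpha(x,\varepsilon))\leq\exp(-s\cdot S_m\varphi(x))/c$.

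The main obstacle is reconciling the three Hahn--Banach requirements simultaneously: positivity, the correct normalization $\Lambda(1)=c$, and the cone-domination $\Lambda\leq L_0$ on $C_+(X)$. What makes everything cohere is the decision to enforce the covering constraint in $L_0$ only on $K$ rather than on all of $X$: this both pins $L_0(1)$ exactly to $c$ (rather than the potentially larger infimum obtained from covering $X$) and kills $L_0$ on functions vanishing on $K$, which is precisely what forces the measure $\mu$ to concentrate on $K$ without any further post-processing beyond the Riesz/regularity step.
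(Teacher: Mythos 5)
Your overall architecture (a weighted covering functional with domination required only on $K$, Hahn--Banach, Riesz representation, then Urysohn plus regularity for $\mu(K)=1$ and the single-ball family for the ball estimate) is exactly the paper's, and those final steps are fine. The genuine gap is the assertion that $\tilde L(f)=L_0(f+\|f\|_\infty\mathbf 1)-\|f\|_\infty c$ is a \emph{sublinear} extension: positive homogeneity holds, but subadditivity is never verified and is false in general, so the Hahn--Banach application (and with it the sandwich $\tilde L(\pm\mathbf 1)=\pm c$ pinning $\Lambda(\mathbf 1)=c$) is unjustified. Indeed, since $\lambda\mapsto L_0(h+\lambda\mathbf 1)-\lambda c$ is non-increasing and $\|f+g\|_\infty$ can be strictly smaller than $\|f\|_\infty+\|g\|_\infty$, subadditivity of $\tilde L$ essentially requires the superadditivity-type inequality $L_0(h+a\mathbf 1)\ge L_0(h)+a\,c$ for $h\ge 0$, $a\ge 0$, which covering-type infima do not satisfy. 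A model computation makes this concrete: if $K=\{x,y,z\}$ and the admissible balls of equal weight $1$ realize the pattern $B_1\supseteq\{x,y\}$, $B_2\supseteq\{y,z\}$, $B_3\supseteq\{x,z\}$, then for a value profile $(v_x,v_y,v_z)\ge 0$ one gets $L_0(v)=\max\bigl\{\tfrac12(v_x+v_y+v_z),\,\max_p v_p\bigr\}$ and $c=\tfrac32$; choosing continuous $f,g$ with profiles $(1,1,-2)$ and $(1,-2,1)$ on $K$, all three sup norms equal to $2$ (take disjoint bumps), gives $\tilde L(f)=\tilde L(g)=0$ while $\tilde L(f+g)=1$. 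So the formula you extend by is not sublinear, and the existence of a linear $\Lambda\le\tilde L$ with $\Lambda(\mathbf 1)=c$ does not follow.

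The repair is exactly what the paper does and costs you nothing elsewhere: dominate by $p(g):=\tfrac1c\,W^{\alpha}(\chi_K\cdot g,s,\varepsilon,n,\varphi)$, i.e.\ your $L_0$ evaluated at the positive part of $g$ (the constraint $\sum_i c_i\chi_{B_i}\ge \chi_K g$ only bites where $g>0$ on $K$). This $p$ is already sublinear on all of $C(X)$, using $(f+g)^+\le f^++g^+$ together with monotonicity and subadditivity of $L_0$ on the cone, and it satisfies $p(\mathbf 1)=1$ and $p(g)=0$ for $g\le 0$. Now extend the linear map $t\mathbf 1\mapsto t$ from the subspace of constants (it is dominated by $p$ there, since $p(t\mathbf 1)=0\ge t$ for $t<0$), obtaining $\Psi$ with $\Psi(\mathbf 1)=1$ exactly; positivity follows from $\Psi(g)\ge -p(-g)=0$ for $g\ge 0$. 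After Riesz representation, your Urysohn/regularity argument for $\mu(K)=1$ and your single-ball bound $W^{\alpha}(\chi_K\cdot g,s,\varepsilon,n,\varphi)\le\exp(-s\,S_m\varphi(x))$ for $0\le g\le\chi_{B_m^{\alpha}(x,\varepsilon)}$, $m\ge n$, go through verbatim and yield the lemma.
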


\begin{proof}
Notice that $0<c<\infty$. We define a function $p:C(X, \mathbb{R})\rightarrow \mathbb{R}$ by $$p(g)=\frac{1}{c}W^{\alpha}(\chi_{K}\cdot g,s,\varepsilon,n,\varphi).$$
Then it is easy to see that the function $p$ satisfies the following properties:

(1) $p(f+g)\leq p(f)+p(g)\ \text{for any}\ f,g\in C(X, \mathbb{R})$;

(2) $p(tg)=tp(g)$ for any $t\geq 0$ and $g\in C(X, \mathbb{R})$;

(3) $p(\mathbf{1})=1$, $\mathbf{1}$ is the constant function $\mathbf{1}(x)=1$,  $0\leq p(g)\leq \|g\|$ for any $g\in C(X, \mathbb{R})$ and $p(f)=0$ for all $f\in C(X, \mathbb{R})$ with $f\leq 0$.

Applying the Hahn-Banach theorem, we can extend the linear function $t\rightarrow t p(\mathbf{1})$, $t\in\mathbb R$, from the subspace of the constant functions to a linear functional $\Psi:C(X, \mathbb{R})\rightarrow \mathbb R$ satisfying
$$\Psi(\mathbf{1})=p(\mathbf{1})=1\ \text{and}\ -p(-g)\leq\Psi(g)\leq p(g)\ \text{for any}\ g\in C(X, \mathbb{R}).$$

If $g\in C(X, \mathbb{R})$ with $g\geq 0$, then $p(-g)=0$, and hence $\Psi(g)\geq 0$. By the  Riesz Representation Theorem,  there exists $\mu\in M(X)$ such that $\Psi(g)=\int_{X}gd\mu$ for $g\in C(X, \mathbb{R})$.

Now we show  $\mu(K)=1$. To see this, for any compact set $E\subset X\backslash K$, by Urysohn's lemma there exists $g\in C(X, \mathbb{R})$ such that $0\leq g\leq 1$, $g(x)=0$ for $x\in K$, and  $g(x)=1$ for $x\in E$. This shows $p(g)=0$, and  hence $\mu(E)\leq\Psi(g)\leq p(g)=0$. We get $\mu(K)=1$ by the regularity of $\mu$.

Finally, we show that $$\mu(B_{m}^{\alpha}(x,\varepsilon))\leq \frac{1}{c}\exp\left(-s \cdot S_mf(x)\right)$$ for all $x\in X$ and $m\geq n$. Again,  by Urysohn's lemma for any compact set $E\subset B_{m}^{\alpha}(x,\varepsilon)$, there exists $g\in C(X, \mathbb{R})$ such that  $0\leq g\leq 1$, $g(y)=0$ for $y\in X\backslash B_{m}^{\alpha}(x,\varepsilon)$ and $g(y)=1$ for $y\in E$. Thus, $\mu(E)\leq\Psi(g)\leq p(g).$
 Notice  that $\chi_{K}\cdot g\leq\chi_{B_{m}^{\alpha}(x,\varepsilon)}$ and $m\geq n$. Then $$W^{\alpha}(\chi_{K}\cdot g,s,\varepsilon,n,\varphi)\leq \exp\left(-s \cdot S_mf(x)\right),$$
  which implies that $\mu(E)\leq\ p(g)\leq\frac{1}{c}\exp\left(-s \cdot S_mf(x)\right)$. Then it follows  from  the regularity of $\mu$ that
$$\mu(B_{m}^{\alpha}(x,\varepsilon)) \leq \frac{1}{c}\exp\left(-s \cdot S_mf(x)\right)$$
for all $x\in X$ and $m\geq n$.
\end{proof}

\subsubsection{Variational principle}

With the help  of  Billingsley Type Theorem of  $\alpha$-BS dimension and Frostman's lemma of  weighted $\alpha$-BS  dimension, we give the proof of Theorem \ref{thm 1.2}.

\begin{proof}[Proof of Theorem \ref{thm 1.2}]
Let $0<s<{\rm dim}_{BS}^{\alpha}(f,K,\varphi)$. Then, by  Theorem \ref{t34} there exist $\varepsilon>0$ and $n\in\mathbb N$ such that
$c:=W^{\alpha}(K,s,\varepsilon,n,\varphi)>0.$
Using Lemma \ref{l36}, there exists $\mu\in\mathcal{M}(X)$ with $\mu(K)=1$ such that
$$\mu(B_{m}^{\alpha}(x,\varepsilon))\leq\frac{1}{c}\exp\left(-s \cdot S_mf(x)\right)~\text{for any}~ x\in X, m\geq n.$$
Thus, for any $x\in X$, we have
$${P}_{\mu}^{\alpha}(x,f,\varphi)\geq\liminf_{m\rightarrow\infty}\frac{-\log\mu({B_{m}^{\alpha}(x,\varepsilon))}}
{S_mf(x)}\geq s.$$
Therefore, ${P}_{\mu}^{\alpha}(f,\varphi)=\int_{X} {P}_{\mu}^{\alpha}(x,f,\varphi)d\mu(x)\geq s$. This proves that
$${\rm dim}_{BS}^{\alpha}(f,K,\varphi)\leq\sup\{{P}_{\mu}^{\alpha}(f,\varphi):\mu\in\mathcal{M}(X),\mu(K)=1\}.$$

Now we prove the reverse inequality:
 $${\rm dim}_{BS}^{\alpha}(f,K,\varphi)\geq\sup\{{P}_{\mu}^{\alpha}(f,\varphi):\mu\in\mathcal{M}(X),\mu(K)=1\}.$$
Fix $\mu\in\mathcal{M}(X)$ with $\mu(K)=1$. Let $0<s<{P}_{\mu}^{\alpha}(f,\varphi)=\int_{K} {P}_{\mu}^{\alpha}(x,f,\varphi)d\mu(x)$. Then there is a Borel  set $K_0 \subset K$  with  positive $\mu$-measure such that
$${P}_{\mu}^{\alpha}(x,f,\varphi) >s$$
for all $x\in K_0$. By the  Billingsley Type Theorem stated in  Theorem \ref{t33}, we have ${\rm dim}_{BS}^{\alpha}(f,K,\varphi)\geq {\rm dim}_{BS}^{\alpha}(f,K_0,\varphi) \geq s$. Letting  $s \to {P}_{\mu}^{\alpha}(f,\varphi)$, we get
$${\rm dim}_{BS}^{\alpha}(f,K,\varphi)\geq {P}_{\mu}^{\alpha}(f,\varphi)$$
for all  $\mu\in\mathcal{M}(X)$ with $\mu(K)=1$. This completes the proof.
\end{proof}

\subsection{Linking $\alpha$-Bowen topological entropy and $\alpha$-topological entropy}

In this subsection, we prove a variational  inequality for $\alpha$-topological entropy.

Let $(X,d,f)$ be a  TDS. A set $E\subset X$ is  called an \emph{$(n, \alpha,\varepsilon)$-spanning set of $X$} if  any $x\in X$, there exists  $y\in E$ such that $d_n^{\alpha}(x,y)<\varepsilon$. Let  $r_n(f,\alpha,X,\varepsilon)$  denote the smallest cardinality of  $(n, \alpha,\varepsilon)$-spanning sets of $X$.   Put
$$h_{top}^{\alpha}(f, X,\varepsilon)=\limsup_{n\to \infty} \frac{1}{n} \log r_n(f,\alpha,X,\varepsilon).$$

\begin{defn}\cite{k13}
We  define the $\alpha$-topological   entropy of $X$  as
$$h_{top}^{\alpha}(f, X)=\lim_{\varepsilon
\to 0}h_{top}^{\alpha}(f, X,\varepsilon).$$
\end{defn}

\begin{rem}
If $\alpha =0$, the above definition  recovers the classical topological entropy $h_{top}(f,X)$ of $X$.
\end{rem}

\begin{pro}\label{thm 1.3}
Let $(X,d,f)$ be a TDS. Then for any $\alpha \geq 0$,
$$h_{top}^{\alpha,B}(f,X)\leq h_{top}^{\alpha}(f, X),$$
where $h_{top}^{\alpha,B}(f,X)$ denotes the $\alpha$-Bowen topological entropy of $X$.
\end{pro}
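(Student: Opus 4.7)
\emph{Proof plan.} The plan is to feed $(n,\alpha,\varepsilon)$-spanning sets directly into the Carath\'eodory--Pesin construction defining $h_{top}^{\alpha,B}(f,X)={\rm dim}_{BS}^{\alpha}(f,X,1)=\lim_{\varepsilon\to 0}M^{\alpha}(X,\varepsilon,1)$. Fixing $\varepsilon>0$, I would take any $s$ strictly larger than $h_{top}^{\alpha}(f,X,\varepsilon)$, show that $M^{\alpha}(X,s,\varepsilon,1)=0$ (so that $M^{\alpha}(X,\varepsilon,1)\le s$), and then pass to $s\downarrow h_{top}^{\alpha}(f,X,\varepsilon)$ followed by $\varepsilon\to 0$.

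For the key step, for each $n$ I would pick an $(n,\alpha,\varepsilon)$-spanning set $E_n\subset X$ of minimal cardinality $r_n(f,\alpha,X,\varepsilon)$; compactness of $X$ in the (topologically equivalent) metric $d_n^{\alpha}$ guarantees such a finite $E_n$ exists. By definition $X=\bigcup_{x\in E_n}B_{n}^{\alpha}(x,\varepsilon)$, a finite family of $\alpha$-Bowen balls of common length $n$. With $\varphi\equiv 1$ one has $S_n\varphi(y)=n$, so this cover contributes $r_n(f,\alpha,X,\varepsilon)\exp(-sn)$ to the infimum defining $M^{\alpha}(X,s,\varepsilon,n_0,1)$ whenever $n\ge n_0$. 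Since $s>\limsup_n\tfrac{1}{n}\log r_n(f,\alpha,X,\varepsilon)$, there exist $N\in\mathbb{N}$ and $\delta>0$ with $r_n(f,\alpha,X,\varepsilon)\le\exp((s-\delta)n)$ for every $n\ge N$; hence $M^{\alpha}(X,s,\varepsilon,n_0,1)\le\exp(-\delta n)$ for all $n\ge\max\{N,n_0\}$, and letting $n\to\infty$ forces $M^{\alpha}(X,s,\varepsilon,n_0,1)=0$ for each $n_0$. Passing $n_0\to\infty$ gives $M^{\alpha}(X,s,\varepsilon,1)=0$, whence $M^{\alpha}(X,\varepsilon,1)\le s$.

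Taking the infimum over admissible $s$ and then $\varepsilon\to 0$ yields $h_{top}^{\alpha,B}(f,X)\le h_{top}^{\alpha}(f,X)$. The argument is essentially a definitional chase, and I do not expect any genuine obstacle; the only place demanding a bit of care is converting $\limsup_n a_n<s$ into a \emph{uniform} bound $a_n<s-\delta$ valid for all sufficiently large $n$ (not merely for infinitely many), which is immediate from $\limsup_n a_n=\lim_N\sup_{n\ge N}a_n$. Morally, the inequality reflects the fact that a family of dynamical balls of uniform length $n$ is a particularly economical admissible cover in the Carath\'eodory--Pesin sense, so whatever exponential complexity is measured by spanning sets already dominates the measure $M^{\alpha}(X,\varepsilon,1)$.
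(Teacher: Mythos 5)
Your proposal is correct and follows essentially the same route as the paper: cover $X$ by the $\alpha$-Bowen balls centered at a minimal $(n,\alpha,\varepsilon)$-spanning set, bound the resulting Carath\'eodory sum $r_n(f,\alpha,X,\varepsilon)e^{-sn}$, and conclude $M^{\alpha}(X,\varepsilon,1)\le s$ before letting $\varepsilon\to 0$. The only differences (working at fixed $\varepsilon$ with $s>h_{top}^{\alpha}(f,X,\varepsilon)$ and driving the outer measure to $0$ via $e^{-\delta n}$, rather than the paper's bound by $1$ for $s>h_{top}^{\alpha}(f,X)$) are cosmetic.
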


\begin{proof}
For any $s > h_{top}^{\alpha}(f, X) $, there exists $\varepsilon_0>0$  such that  for every $0<\varepsilon <\varepsilon_0$,
$$r_n(f,\alpha,X,\varepsilon)<e^{ns}$$
for all sufficiently large $n$. Fix such  $\varepsilon$ and $n$. If $E$  is an $(n,\alpha,\varepsilon)$-spanning set of $X$,  then  $X=\cup_{x\in E} B_n^{\alpha}(x,\varepsilon)$. Hence,
$$M^{\alpha}(X,s,\varepsilon,n,1)\leq e^{ns} \cdot e^{-ns}=1.$$
This implies that $h_{top}^{\alpha,B}(f,X) \leq s$.  Letting $s \to h_{top}^{\alpha}(f, X) $, we get $h_{top}^{\alpha,B}(f,X)\leq   h_{top}^{\alpha}(f, X)$.

\end{proof}

As the corollary of Theorems \ref{thm 1.2} and   Proposition \ref{thm 1.3},  for any TDS  we  have a variational  inequality for $\alpha$-topological entropy.

\begin{thm}\label{thm 4.4}
Let $(X,d,f)$  be a TDS. Then for all $\alpha  > 0$,
$$\sup_{\mu  \in M(X,f)}\underline{h}_{\mu}^{BK}(f,\alpha)\leq  h_{top}^{\alpha}(f, X).$$
where $$\underline{h}_{\mu}^{BK}(f,\alpha)=\int_{X}\lim_{\varepsilon\rightarrow0}\liminf_{n\rightarrow\infty} \frac{-\log\mu({B_{n}^{\alpha}(x,\varepsilon))}}
{n}d\mu(x)$$ 
denotes the lower $\alpha$-local   Brin-Katok entropy of $\mu$.
\end{thm}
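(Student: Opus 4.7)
The plan is to derive this as a short consequence of Theorem \ref{thm 1.2} applied to the constant potential $\varphi \equiv 1$, combined with Proposition \ref{thm 1.3} and the obvious inclusion $M(X,f)\subset \mathcal{M}(X)$.

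First I would specialize Theorem \ref{thm 1.2} to $K=X$ and $\varphi\equiv 1$. Since $S_n 1 (x)=n$, the pointwise quantity
$$P_{\mu}^{\alpha}(f,x,1)=\lim_{\varepsilon\to 0}\liminf_{n\to\infty}\frac{-\log\mu(B_n^{\alpha}(x,\varepsilon))}{n}$$
is precisely the integrand defining $\underline{h}_{\mu}^{BK}(f,\alpha)$, so integrating against $\mu$ yields $P_{\mu}^{\alpha}(f,1)=\underline{h}_{\mu}^{BK}(f,\alpha)$. On the dimension side, property (1) of the introduction identifies ${\rm dim}_{BS}^{\alpha}(f,X,1)=h_{top}^{\alpha,B}(f,X)$. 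Hence Theorem \ref{thm 1.2} becomes
$$h_{top}^{\alpha,B}(f,X)=\sup\{\underline{h}_{\mu}^{BK}(f,\alpha):\mu\in\mathcal{M}(X),\,\mu(X)=1\}=\sup_{\mu\in\mathcal{M}(X)}\underline{h}_{\mu}^{BK}(f,\alpha).$$

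Next I would note $M(X,f)\subset \mathcal{M}(X)$, which immediately gives
$$\sup_{\mu\in M(X,f)}\underline{h}_{\mu}^{BK}(f,\alpha)\leq \sup_{\mu\in\mathcal{M}(X)}\underline{h}_{\mu}^{BK}(f,\alpha)=h_{top}^{\alpha,B}(f,X).$$
Finally I would invoke Proposition \ref{thm 1.3} to obtain $h_{top}^{\alpha,B}(f,X)\leq h_{top}^{\alpha}(f,X)$, and chain these two inequalities together.

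Since all the heavy machinery (the variational principle Theorem \ref{thm 1.2} and the comparison with spanning-set entropy in Proposition \ref{thm 1.3}) is already in place, there is essentially no obstacle here: the argument is a bookkeeping specialization. The only point that warrants a brief comment is confirming that the definition of $P_{\mu}^{\alpha}(f,\varphi)$ in Section \ref{sec:est} really collapses to $\underline{h}_{\mu}^{BK}(f,\alpha)$ when $\varphi\equiv 1$, which is transparent from $S_n 1\equiv n$. No compactness hypothesis on invariant measures is needed because we only use one direction of the variational principle (the upper bound on the supremum side).
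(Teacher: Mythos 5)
Your proposal is correct and is exactly the paper's intended derivation: the paper states Theorem \ref{thm 4.4} as an immediate corollary of Theorem \ref{thm 1.2} (with $K=X$, $\varphi\equiv 1$, so that ${\rm dim}_{BS}^{\alpha}(f,X,1)=h_{top}^{\alpha,B}(f,X)$ and $P_{\mu}^{\alpha}(f,1)=\underline{h}_{\mu}^{BK}(f,\alpha)$) together with Proposition \ref{thm 1.3} and the inclusion $M(X,f)\subset\mathcal{M}(X)$. Nothing further is needed.
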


\begin{rem}
$(1)$ 
Similar to the proof of the lower bound of the classical variational principle for topological entropy (cf. \cite[Theorem 8.6, p.189]{w82}),  Theorem \ref{thm 4.4} was first proved by  Kawan \cite[Theorem 4]{k13} for a  $\mathcal{C}^{1+\varepsilon}$-diffeomorphism $f$ on a
 smooth compact manifold $X$. Here, without replying on the differential structures we give a different approach to extend this inequality to all TDSs.

(2) For expansive  dynamical systems, the authors in \cite[Corollary 1.6]{ctx25} proved   that   there exists a metric on phase space such that if $\alpha$  varies in a bounded interval of $\mathbb{R}$, then the equality  in  Theorem  \ref{thm 4.4} holds. It is still an open question  whether the equality is  valid for all TDSs. 
\end{rem}

\section{$\alpha$-topological entropy and Hausdorff dimension of symbolic systems} \label{sec 4}

In this section,  we  give  examples of symbolic systems to illustrate  how  $\alpha$-BS dimension varies  in  the parameter $\alpha$, and prove Theorem \ref{thm 1.4}.

We first recall the essential  setting and  relevant  terminologies of symbolic systems.  Let $\{0,...,k-1\}$ be  $k$-symbols endowed with the discrete topology. Let $\Sigma_k^{+}=\{0,...,k-1\}^{\mathbb{N}}$ be the product space, whose product topology is metrizable by the metric $$d(x,y)=\mathrm{e}^{-n(x,y)},$$
where $n(x,y)$ is  an integer that $x,y$ first disagree; we let $n(x,y)=\infty$ if $x=y$.

Let $\sigma:\Sigma_k^{+}\rightarrow \Sigma_k^{+}$ be the left shift given by $\sigma((x_n)_n)=(x_{n+1})_n$.  Then $(\Sigma_k^{+},d,\sigma)$ is called the \emph{full 1-sided} shift over $\{0,...,k-1\}$. Let $A$ be an  incidence matrix of size of $k\times k$ with entries  either $0$ or $1$. We always assume that  every row of $A$ at lest has  one $1$. The \emph{subshift of finite type} is the  $\sigma$-invariant  closed  subset $$\Sigma_A^{+}:=\{x\in \Sigma_k^{+}: A_{x_n,x_{n+1}}=1~\forall~n \in \mathbb{N}\}$$
with the left shift map $\sigma$ restricted to $\Sigma_A^{+}$. For $x=(x_0,x_1,...) \in \Sigma_A^{+}$,  let $$[x_0,..,x_n]:=\{y\in \Sigma_A^{+}: y_j=x_j ~\text{for}~ j=0,..,n\}$$    be a cylinder set of $x$ with the length $n+1$. By $(x_0,x_1,...,x_n)$  we mean an $A$-admissible $n$-word if $A_{x_j,x_{j+1}}=1$ for all $0\leq j\leq n-1$.  Denote by $\Sigma_A^n$ the set of all $A$-admissible $n$-words.

Now  we are in a position to  prove Theorem \ref{thm 1.4}.

\begin{proof}[Proof of Theorem \ref{thm 1.4}]

(1). For any $0<\varepsilon<1$, there exists a positive integer $N_0$ such that
$$(\frac{1}{\mathrm{e}} )^{N_0+1}<\varepsilon\leq (\frac{1}{\mathrm{e}} )^{N_0}.$$
Then for  all sufficiently large $n$,
\begin{align}\label{equ 4.1}
	[x_0,...,x_{n+\lfloor n\alpha \rfloor+N_0+2}]\subset B_n^{\alpha}(x,\varepsilon) \subset [x_0,...,x_{n+\lfloor \alpha(n-1) \rfloor+N_0+1}]
\end{align}
holds for every $x\in \Sigma_A^{+}$, where  $\lfloor \beta \rfloor:=\max\{n\in \mathbb{Z}: n\leq \beta\}$.

  Recall that in symbolic systems, it is well-known that the Hausdorff dimension can be equivalently defined by the cylinder sets of symbolic systems. Namely,  for every $E \subset \Sigma_A^{+}$, the Hausdorff dimension of $E$ is given by 
\begin{align*}
	{\rm dim}_H(E,d)=\inf\{s: \widetilde{\mathcal{H}}^s(E)=0\}= \sup\{s: \widetilde{\mathcal{H}}^s(E)=\infty\},
\end{align*}
where 
$\widetilde{\mathcal{H}}^s(E)=\lim_{N\to \infty}\widetilde{\mathcal{H}}_N^s(E)$ and
$$\widetilde{\mathcal{H}}_N^s(E):=\inf\{\sum_{i=1}^{\infty} \mathrm{e}^{-n_is}\},$$
where the infimum ranges over all   countable families of cylinder sets $E_i=[x_0,x_1,...,x_{n_i}]$ of $\Sigma_A^{+}$  such that  $E\subset \cup_{i=1}^{\infty}E_i$ and $n_i \geq N$.

Now we show  that
$$h_{top}^{\alpha,B}(\sigma,E)=(1+\alpha){\rm dim}_H(E,d).$$

Fix $0<\varepsilon<1$ and take $N_0$ as in (\ref{equ 4.1}). Fix $n\geq N_0$.  Choose  a  countable family  of $\alpha$-Bowen balls  $\{B_{n_i}^{\alpha}(x_i,\varepsilon)\}_{i\in I}$  covering $E$ with $x_i\in \Sigma_A^{+}$ and $n_i\geq n$.  Then, by  (\ref{equ 4.1}), $E$ can be covered by  the cylinder sets of $\Sigma_A^{+}$, i.e.,
$$E\subset \cup_{i\in I} [x_0^i,...,x_{n_i+\lfloor \alpha(n_i-1) \rfloor+N_0+1}^i]$$ for $x_i=(x_0^i,x_1^i,...) \in \Sigma_A^{+}.$ Then for all $s> 0$,
\begin{align*}
	\widetilde{\mathcal{H}}_{n+\lfloor \alpha(n-1) \rfloor+N_0+1}^s(E) \leq & \sum_{i\in I}\mathrm{e}^{-(n_i+\lfloor \alpha(n_i-1) \rfloor+N_0+1)s}\\
	 < & \sum_{i\in I}\mathrm{e}^{-(n_i+\lfloor \alpha(n_i-1) \rfloor)s}\\
	<&\sum_{i\in I}\mathrm{e}^{-((1+\alpha)(n_i-1))s}\\
	=&\mathrm{e}^{(1+\alpha)s}\sum_{i\in I}\mathrm{e}^{-((1+\alpha)s)n_i}.
\end{align*}
Hence,  $$\widetilde{\mathcal{H}}_{n+\lfloor \alpha(n-1) \rfloor+N_0+1}^s(E)\leq \mathrm{e}^{(1+\alpha)s} \cdot M^{\alpha}(E,(1+\alpha)s,\varepsilon,n,1).$$ Letting $n \to \infty$, we obtain that
$$\widetilde{\mathcal{H}}^s(E)\leq  \mathrm{e}^{(1+\alpha)s} \cdot M^{\alpha}(E,(1+\alpha)s,\varepsilon,1).$$ If $\beta > M^{\alpha}(E,\varepsilon,1)$, we have $\widetilde{\mathcal{H}}^{\frac{\beta}{1+\alpha}}(E)=0$, and hence ${\rm dim}_H(E,d)\leq  \frac{\beta}{1+\alpha}$. Then we deduce that $$(1+\alpha){\rm dim}_H(E,d)\leq  h_{top}^{\alpha,B}(\sigma,E).$$

Again, fix $0<\varepsilon<1$ and take $N_0$ as in (\ref{equ 4.1}).
Let $N\geq N_0$. Take  a   countable family of cylinder sets $E_i=[x_0,x_1,...,x_{n_i}]$ of $\Sigma_A^{+}$  such that  $E\subset \cup_{i=1}^{\infty}E_i$ and $n_i \geq \lfloor N(1+\alpha) \rfloor+N_0+3$.   For any $n_i\geq \lfloor N(1+\alpha) \rfloor+N_0+3$,  there is a unique $N_i\geq N$ such that
$$\lfloor N_i(1+\alpha) \rfloor+N_0+3<n_i\leq \lfloor (N_i+1)(1+\alpha)  \rfloor+N_0+3.$$
Notice that $\lfloor (N_i+1)(1+ \alpha)  \rfloor-\lfloor N_i(1+ \alpha)  \rfloor \leq \alpha +2$. We  rewrite $n_i$ as $n_i=\lfloor N_i(1+\alpha) \rfloor+q_i+N_0+3$, where $0\leq q_i\leq \lfloor \alpha+2  \rfloor$. Since $n+\lfloor n\alpha \rfloor \leq \lfloor N(1+\alpha) \rfloor+1$, by (\ref{equ 4.1})  for sufficiently large $n$, one has
$$[x_0,...,x_{\lfloor N(1+\alpha) \rfloor+N_0+3}]\subset B_n^{\alpha}(x,\varepsilon)$$
for all $x\in \Sigma_A^{+}$.
Then  $E$
can be  also covered by $\alpha$-Bowen balls:
$$E\subset \cup_{i=1}^{\infty}[x_0,...,x_{n_i-q_i}] \subset \cup_{i=1}^{\infty}B_{N_i}^{\alpha}(y_i,\varepsilon),$$
for some $y_i\in [x_0,...,x_{n_i-q_i}]$. Therefore, for all $s> 0$,
\begin{align*}
	\sum_{i=1}^{\infty}\mathrm{e}^{-n_is}&\geq  \mathrm{e}^{-(\lfloor \alpha +2 \rfloor+N_0+3)}\sum_{i=1}^{\infty}e^{- s \lfloor  N_i(1+\alpha) \rfloor}\\
	&\geq  \mathrm{e}^{-(\lfloor \alpha +2 \rfloor+N_0+3)}\sum_{i=1}^{\infty}e^{-N_i(1+\alpha)s}\\
	&\geq  \mathrm{e}^{-(\lfloor \alpha +2 \rfloor+N_0+3)} \cdot M^{\alpha}(E,(1+\alpha)s,\varepsilon,N,1).
\end{align*}
This yields that  $$ \mathrm{e}^{(\lfloor \alpha +2 \rfloor+N_0+3)}\cdot \widetilde{\mathcal{H}}_{\lfloor N(1+\alpha) \rfloor+N_0+3}^s(E)\geq M^{\alpha}(E,(1+\alpha)s,\varepsilon,N,1).$$ Letting $N \to \infty$, we get
$ \mathrm{e}^{(\lfloor \alpha +2 \rfloor+N_0+3)}\cdot \widetilde{\mathcal{H}}^s(E)\geq M^{\alpha}(E,(1+\alpha)s,\varepsilon,1).$ This allows us to get
$$(1+\alpha){\rm dim}_H(E,d)\geq  h_{top}^{\alpha,B}(\sigma,E).$$

(2).  For every word $(x_0,...,x_{n+\lfloor n\alpha \rfloor+N_0+2}) \in \Sigma_A^{n+\lfloor n\alpha \rfloor+N_0+2}$, choose a point  $\tilde{x} \in \Sigma_A^{+}$ such that $\tilde{x}_j=x_j$ for all $0\leq j \leq  x_{n+\lfloor n\alpha \rfloor+N_0+2}$. Then, by (\ref{equ 4.1}) we have
 \begin{align}\label{inequ 4.2}
r_n(\sigma,\alpha,\Sigma_A^{+},\varepsilon)\leq \#\Sigma_A^{n+\lfloor n\alpha \rfloor+N_0+2}.
\end{align}
We define the matrix  norm of $A$ to be  $||A||=\sum_{i,j=0}^{k-1}|A_{i,j}|$. It is well-known that the spectral radius of $A$ is given by $r(A)=\lim_{n \to \infty}||A^n||^{\frac{1}{n}}$. Notice that  $||A^n||=\#\Sigma_A^{n}$ for every $n\geq 1$ and  the sequence $\{ \log \#\Sigma_A^{n}\}_n$ is sub-additive in $n$. Hence,
\begin{align*}
\log r(A)=\lim_{n\to \infty} \frac{\log ||A^n||}{n}=\lim_{n\to \infty} \frac{\log \#\Sigma_A^{n}}{n}.
\end{align*}
Now, by (\ref{inequ 4.2}) we deduce that
$$h_{top}^{\alpha}(\sigma, \Sigma_A^{+},\varepsilon) \leq \limsup_{n \to \infty} \frac{ \log \#\Sigma_A^{n+\lfloor n\alpha \rfloor+N_0+2} }{n}=(1+\alpha)\log r(A).$$
Letting $\varepsilon \to 0$,  we have
\begin{align}\label{inequ 4.3}
h_{top}^{\alpha}(\sigma, \Sigma_A^{+}) \leq  (1+\alpha) \log r(A).
\end{align}
Similarly, we obtain that $ h_{top}^{\alpha}(\sigma, \Sigma_A^{+}) \geq (1+\alpha) \log r(A).$ Hence, we obtain that 
$$h_{top}^{\alpha}(\sigma, \Sigma_A^{+}) = (1+\alpha) \log r(A).$$

In particular, if $\alpha =0$, we have $  h_{top}^{B}(\sigma,\Sigma_A^{+})={\rm dim}_H(\Sigma_A^{+},d)$ and $h_{top}(\sigma, \Sigma_A^{+}) =  \log r(A)$.  By  \cite[Proposition 1.1]{b73}, it states that $ h_{top}^{B}(\sigma,E)=h_{top}(\sigma, \Sigma_A^{+})$. This shows that  ${\rm dim}_H(\Sigma_A^{+},d)=\log r(A)$ and hence $h_{top}^{\alpha,B}(\sigma,\Sigma_A^{+})=h_{top}^{\alpha}(\sigma, \Sigma_A^{+})$ for every $\alpha \geq 0$.
\end{proof}

\begin{rem}
The dependence on parameters $\alpha$ of the Hausdorff dimension of fractal invariant sets was studied with the help of pressure, for expanding maps by \cite{RB, rue82}, for hyperbolic diffeomorphisms by \cite{mm83}, and for hyperbolic non-invertible maps by \cite{ms13,mu10}.
\end{rem}

Using  Theorem  \ref{thm 1.4}, we extend  several classical  results in dimension theory \cite{b73,pp84,tv03,cts05,ps07} to $\alpha$-topological entropy of full shifts over finite symbols.

\begin{cor}\label{cor 4.1}
Let $(\Sigma_k^{+},d,\sigma)$ be the full 1-sided shift. Then for every $\alpha \geq 0$,

$(1)$  $h_{top}^{\alpha,B}(\sigma,\Sigma_k^{+})=h_{top}^{\alpha}(\sigma, \Sigma_k^{+}) = (1+\alpha) \log k.$

$(2)$   ${\rm dim}_H(\Sigma_k^{+},d)=\log k$.

$(3)$ The $\alpha$-topological entropy satisfies the following variational principle: $$h_{top}^{\alpha}(\sigma, \Sigma_k^{+})=\max_{\mu \in M(\Sigma_k^{+},\sigma)}\{(1+\alpha)h_{\mu}(\sigma)\}.$$

$(4)$ For any $\mu \in M(\Sigma_k^{+}, \sigma)$,
$\underline{h}_{\mu}^{BK}(\sigma,\alpha)=(1+\alpha)h_{\mu}(\sigma),$
where
$$ \underline{h}_{\mu}^{BK}(\sigma,\alpha)=\int_{\Sigma_k^{+}}\lim_{\varepsilon\rightarrow0}\liminf_{n\rightarrow\infty}
-\frac{\log\mu(B_{n}^{\alpha}(x,\varepsilon))}{n}d\mu(x)$$ is the lower $\alpha$-local Brin-Katok entropy of $\mu$.

$(5)$  For any $\mu\in M(\Sigma_k^{+}, \sigma)$, $h_{top}^{\alpha,B}(\sigma,G_{\mu})=(1+\alpha)h_{\mu}(\sigma),$
where
$$G_{\mu}:=\{x\in \Sigma_k^{+}:\lim_{n \to \infty}\frac{1}{n}\sum_{j=0}^{n-1}g(x_j)=\int g d\mu ~\text{for  all}~g\in C(\Sigma_k^{+},\mathbb{R})\}$$ is the set of  generic points of $\mu$.

$(6)$   Let $g\in C(\Sigma_k^{+},\mathbb{R})$. For $s\in \mathbb{R}$, define the $s$-level set of $g$ and the set of divergent points of $g$ as
\begin{align*}
K_s(g):=\{(x_n)\in \Sigma_k^{+}:\lim_{n \to \infty}\frac{1}{n}\sum_{j=0}^{n-1}g(x_j)=s\}
\end{align*}
and $I_{g}:=\{(x_n)\in \Sigma_k^{+}: ~\text{the limit}~\frac{1}{n}\sum_{j=0}^{n-1}g(x_j)~\text{does not exist}\}$ respectively.

If $K_s(g)\not=\emptyset$ and $I_g\not=\emptyset$, then
\begin{align*}
h_{top}^{\alpha,B}(\sigma,K_s(g))&=\sup\{(1+\alpha)h_{\mu}(\sigma):\mu \in M(\Sigma_k^{+}, \sigma),\int gd\mu=s\},\\
h_{top}^{\alpha,B}(\sigma,I(g))&=h_{top}^{\alpha}(\sigma, \Sigma_k^{+}).
\end{align*}
\end{cor}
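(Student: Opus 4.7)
The plan is to specialize Theorem \ref{thm 1.4} to the full shift by taking $A$ to be the $k\times k$ all-ones matrix, so that $\Sigma_A^+=\Sigma_k^+$ and $r(A)=k$, and then combine the resulting scaling identity with classical theorems on symbolic dynamics. The crucial observation, already encoded in Theorem \ref{thm 1.4}(1), is that setting $\alpha=0$ gives $h_{top}^B(\sigma, E)={\rm dim}_H(E,d)$; hence for every subset $E\subset\Sigma_k^+$ and every $\alpha\geq 0$,
$$h_{top}^{\alpha,B}(\sigma,E)= (1+\alpha){\rm dim}_H(E,d) = (1+\alpha)\, h_{top}^B(\sigma,E). \qquad (\ast)$$
Parts (1) and (2) are immediate from Theorem \ref{thm 1.4}, and part (3) follows by combining (1) with the classical variational principle $\log k = h_{top}(\sigma,\Sigma_k^+) = \max_\mu h_\mu(\sigma)$, whose maximum is attained at the uniform Bernoulli measure.

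For part (4), I would apply $\mu$ and $-\log$ to the sandwich (\ref{equ 4.1}) from the proof of Theorem \ref{thm 1.4} and divide by $n$, obtaining
$$\frac{-\log\mu([x_0,\ldots,x_{n+\lfloor\alpha(n-1)\rfloor+N_0+1}])}{n}\leq\frac{-\log\mu(B_n^\alpha(x,\varepsilon))}{n}\leq\frac{-\log\mu([x_0,\ldots,x_{n+\lfloor\alpha n\rfloor+N_0+2}])}{n}.$$
The Shannon--McMillan--Breiman theorem, in its pointwise information-function form valid for every $\mu\in M(\Sigma_k^+,\sigma)$, gives $-\tfrac{1}{m}\log\mu([x_0,\ldots,x_{m-1}])\to h_\mu(\sigma,x)$ $\mu$-a.e., with $\int h_\mu(\sigma,x)\,d\mu = h_\mu(\sigma)$. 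Since each window length has the form $n(1+\alpha)+O(1)$, rescaling forces both outer bounds to converge $\mu$-a.e.\ to $(1+\alpha)h_\mu(\sigma,x)$; this common limit is independent of small $\varepsilon$, and integrating against $\mu$ yields $\underline{h}_\mu^{BK}(\sigma,\alpha)=(1+\alpha)h_\mu(\sigma)$.

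Parts (5) and (6) follow by invoking $(\ast)$ together with standard symbolic facts. For (5), Bowen's theorem \cite{b73} gives $h_{top}^B(\sigma,G_\mu)=h_\mu(\sigma)$ for ergodic $\mu$ (and passes to invariant $\mu$ through the ergodic decomposition, using specification of the full shift), and $(\ast)$ rescales by $1+\alpha$. For (6), the classical multifractal formalism on the full shift (cf.\ \cite{tv03,pp84,cts05}) gives $h_{top}^B(\sigma,K_s(g))=\sup\{h_\mu(\sigma):\int g\,d\mu=s\}$ and $h_{top}^B(\sigma,I_g)=h_{top}(\sigma,\Sigma_k^+)$, after which $(\ast)$ supplies the $1+\alpha$ factor.

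The main obstacle is not in the argument itself, which reduces essentially to one application of $(\ast)$, but in matching each cited classical result to its precise $\alpha=0$ counterpart: part (4) needs the Shannon--McMillan--Breiman convergence at \emph{all} invariant measures via the information function, and parts (5)--(6) require the Bowen-topological-entropy formulas to be stated on arbitrary non-compact subsets of $\Sigma_k^+$ with exactly the genericity, level-set, and divergence-set definitions used here. Once those classical statements are located in the form needed, each of (3)--(6) becomes a one-line consequence of $(\ast)$.
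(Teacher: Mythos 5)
Your proposal is correct and follows essentially the same route as the paper: everything is reduced to the scaling identity $h_{top}^{\alpha,B}(\sigma,E)=(1+\alpha)\,{\rm dim}_H(E,d)=(1+\alpha)h_{top}^{B}(\sigma,E)$ coming from Theorem \ref{thm 1.4}, together with the sandwich (\ref{equ 4.1}) and the Shannon--McMillan--Breiman theorem for (4), and the classical saturated-set and level-set results for (5)--(6) — precisely the ingredients the paper invokes. Two minor remarks: in (5) the statement for arbitrary (possibly non-ergodic) invariant $\mu$ is exactly Pfister--Sullivan's theorem \cite{ps07} (Bowen's ergodic case plus a bare ergodic decomposition does not give the lower bound; one needs the specification-based construction, so cite \cite{ps07} rather than \cite{b73}), and in (3) the paper, instead of quoting the classical variational principle as you do, exhibits the uniform Bernoulli measure and applies its Theorem \ref{thm 1.2} — your shortcut is equally valid.
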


\begin{proof}We prove these statements one by one.

(1) and (2) are the direct consequence of Theorem \ref{thm 1.4}.

(3).  Consider the product   measure $\mu$  on $\Sigma_k^{+}$ generated by $(\frac{1}{k},\frac{1}{k},...,\frac{1}{k})$-probability vector that equally assigns $\frac{1}{k}$ to each symbol.  By (\ref{equ 4.1}),  for every $x\in \Sigma_k^{+}$ one has
$$\mu(B_n^{\alpha}(x,\varepsilon)) \leq \mu([x_0,...,x_{n+\lfloor \alpha(n-1) \rfloor+N_0+1}])\leq  (\frac{1}{k})^{n+\lfloor \alpha(n-1) \rfloor+N_0+2}.$$
Then for any $x\in \Sigma_k^{+}$,
${P}_{\mu}^{\alpha}(\sigma,x,1)\geq (1+\alpha)\cdot  \log k,$
and hence  ${P}_{\mu}^{\alpha}(\sigma,1)\geq (1+\alpha)\cdot  \log k$. By   Theorem \ref{thm 1.2}, we have  $$h_{top}^{\alpha,B}(\sigma,\Sigma_k^{+})\geq {P}_{\mu}^{\alpha}(\sigma,1)\geq (1+\alpha)\cdot  \log k =h_{top}^{\alpha}(\sigma, \Sigma_k^{+}),$$
where we used (1) for the last equality.

(4). Applying  the Shannon-McMillan-Breiman Theorem to the natural generator $\mathcal{P}=\{[0],[1],...,[k-1]\}$,   by (\ref{equ 4.1}),  for all $\mu\in M(\Sigma_k^{+},\sigma)$ we have
$$\underline{h}_{\mu}^{BK}(\sigma,\alpha)=(1+\alpha)h_{\mu}(\sigma,\mathcal{P}) =(1+\alpha)h_{\mu}(\sigma).$$

(5). It is well-known  that the symbolic systems have specification property, and hence admit $g$-almost property \cite[Proposition 2.1]{ps07}. For any  TDS $(X,d,f)$,  Pfister and  Sullivan  \cite[Theorem 1.3]{ps07} proved that  for any $\mu \in M(X,f)$, one has $h_{top}^B(f,G_{\mu})=h_{\mu}(f).$ By Theorem \ref{thm 1.4} (1), we have
$$h_{top}^{\alpha,B}(\sigma,G_{\mu})=(1+\alpha)h_{top}^{B}(\sigma,G_{\mu})=(1+\alpha)h_{\mu}(\sigma).$$

(6). In \cite[Theorem 3.5]{tv03} and \cite[Theorem 3.1]{cts05}, for systems with the specification property the authors respectively proved that   if $K_s(g)$ and $I(g)$ are not empty, then
\begin{align*}
h_{top}^{B}(f,K_s(g))&=\sup\{h_{\mu}(f):\mu \in M(X, f),\int gd\mu=s\},\\
h_{top}^{B}(f,I(g))&=h_{top}(f, X).
\end{align*}
Combining this fact with Theorem \ref{thm 1.4} (1), we  get the desired  equalities.
\end{proof}

\begin{rem}
Under  Corollary \ref{cor 4.1}, (3) is the content of classical variational principle for $\alpha$-topological entropy. Around the Birkhoff's ergodic theorem, (6) gives a conditional variational principle for $\alpha$-Bowen topological entropy of level sets, and  verifies that although the set of divergent points is negligible in the  measure-theoretic  sense, its topological complexity  can be sufficiently large, even  equals the $\alpha$-topological entropy of the whole phase space.
\end{rem}

\section{Open questions}\label{sec 5}

Based on our  main results,  in  this section we pose some open questions  to help us understand  the interplay between the topological dynamical systems and  the dimension theory of $\alpha$-entropy-like quantities.

\textbf{Question:}
Let  $(X,d,f)$ be a TDS.  Does there  exist the non-negative functions $k_1(\alpha)$, $k_2(\alpha)$, $k_3(\alpha)$ on $\mathbb{R}_{\geq 0}$ such that for  any $\alpha \geq 0$,
\begin{align*}
h_{top}^{\alpha,B}(f,X)&=K_1(\alpha) \cdot h_{top}^{B}(f,X),\\
 h_{top}^{\alpha}(f, X)&=K_2(\alpha) \cdot h_{top}(f,X),\\
  \underline{h}_{\mu}^{BK}(f,\alpha)&=K_3(\alpha) \cdot h_{\mu}(f)~\text{for all}~\mu \in M(X,f);
\end{align*}
in particular, whether $k_j(\alpha) \to 1$ as $\alpha \to 0$ for $j=1,2,3$.



 \section*{Appendix: comparing $\alpha$-topological entropy with neutralized topological entropy}\label{sec 6}
 In this section, we  compare  the $\alpha$-topological entropy with the neutralized topological entropy introduced in \cite{ycz24}, and show this two kind of entropy do not coincide in symbolic systems.

 Using  the $\varepsilon$-neutralized  Bowen open ball\footnote[3]{It manifests as the estimation of the asymptotic behaviors of measures with a distinctive geometric shape, when Ovadia and Rodriguez-Hertz  \cite{orh24} define Brin-Katok entropy by using $\varepsilon$-neutralized Bowen open balls to neutralize sub-exponential sets with such a shape. It finds application in describing the neighborhood with a local linearization of the dynamics \cite{orh24, dq25}. See also \cite{ycz24,ctx25} for further results about   the  relation of neutralized topological entropy and dimension theory.} \cite{orh24}:
  $$B_n(x,\mathrm{e}^{-n\varepsilon}):=\{y\in X:d(f^jx,f^jy)<\mathrm{e}^{-n\varepsilon},~ \text{for}~{j=0,1,...,n-1}\},$$
  Yang, Chen and Zhou \cite{ycz24}  defined  the neutralized $\varepsilon$-topological entropy of $X$
 as
  $$\widetilde{h}_{top}(f,X,\varepsilon)=\limsup_{n \to \infty}\frac{\log r_n(X,\mathrm{e}^{-n\varepsilon})}{n},$$
  where  $r_n(X,\mathrm{e}^{-n\varepsilon})$ is the smallest cardinality of  the $\varepsilon$-neutralized  Bowen open balls $B_n(x,\mathrm{e}^{-n\varepsilon})$ needed to cover $X$.

  The \emph{neutralized topological entropy of $X$}  is defined by
  $$\widetilde{h}_{top}(f,X)=\lim_{\varepsilon \to 0}\widetilde{h}_{top}(f,X,\varepsilon)=\inf_{\varepsilon >0}\widetilde{h}_{top}(f,X,\varepsilon).$$

Compared  with the  classical topological entropy, a remarkable feature for the  neutralized topological entropy and the $\alpha$-topological entropy is  that they are  all  defined using Bowen balls with  varied radii. We show that   their   topological complexity  is different  even if we consider  it in  the symbolic systems.


 \begin{pro}
 In the  context of Corollary \ref{cor 4.1},  for every $\alpha > 0$, we have
 $$\widetilde{h}_{top}(\sigma,\Sigma_k^{+})=\log k <(1+\alpha)\log k =  h_{top}^{\alpha}(\sigma,\Sigma_k^{+}).$$
 \end{pro}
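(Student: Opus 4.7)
The second equality $h_{top}^{\alpha}(\sigma,\Sigma_k^{+})=(1+\alpha)\log k$ is already recorded in Corollary \ref{cor 4.1}(1), and the strict inequality $\log k<(1+\alpha)\log k$ is immediate from $\alpha>0$. So the only substantive task is to verify $\widetilde{h}_{top}(\sigma,\Sigma_k^{+})=\log k$. The plan is to identify each neutralized Bowen ball with an honest cylinder set in $\Sigma_k^{+}$ and then count.

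First I will translate the defining inequality $d(\sigma^j x,\sigma^j y)<\mathrm{e}^{-n\varepsilon}$ in the symbolic metric $d(x,y)=\mathrm{e}^{-n(x,y)}$. A direct check shows it is equivalent to $n(\sigma^j x,\sigma^j y)>n\varepsilon$, which in turn is equivalent to $x_{j+i}=y_{j+i}$ for every $i=0,1,\ldots,\lfloor n\varepsilon\rfloor$ (the floor absorbs both the integer and the non-integer cases for $n\varepsilon$). Conjoining over $j=0,1,\ldots,n-1$, the matched indices $j+i$ exhaust exactly $\{0,1,\ldots,n-1+\lfloor n\varepsilon\rfloor\}$, giving the clean identification
$$B_n(x,\mathrm{e}^{-n\varepsilon})=[x_0,x_1,\ldots,x_{n-1+\lfloor n\varepsilon\rfloor}].$$
Since cylinders of fixed length partition $\Sigma_k^{+}$ into $k^{n+\lfloor n\varepsilon\rfloor}$ pieces, the minimal covering cardinality is exactly $r_n(\Sigma_k^{+},\mathrm{e}^{-n\varepsilon})=k^{n+\lfloor n\varepsilon\rfloor}$.

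Plugging this into $\widetilde{h}_{top}(\sigma,\Sigma_k^{+},\varepsilon)=\limsup_n \frac{1}{n}\log r_n(\Sigma_k^{+},\mathrm{e}^{-n\varepsilon})$ yields $(1+\varepsilon)\log k$; the map $\varepsilon\mapsto(1+\varepsilon)\log k$ is monotone, so letting $\varepsilon\to 0^{+}$ (equivalently, taking the infimum in $\varepsilon>0$) produces $\widetilde{h}_{top}(\sigma,\Sigma_k^{+})=\log k$. Combined with $h_{top}^{\alpha}(\sigma,\Sigma_k^{+})=(1+\alpha)\log k$ from Corollary \ref{cor 4.1}(1), the proposition follows.

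There is no serious obstacle here beyond careful bookkeeping of index shifts in the symbolic metric. The point worth highlighting in the final write-up is the contrast between the two families of balls that drives the strict gap: the neutralized radius $\mathrm{e}^{-n\varepsilon}$ imposes a \emph{uniform} matching depth $\lfloor n\varepsilon\rfloor$ at every time $0\leq j\leq n-1$, so the total matched length grows linearly like $n+\lfloor n\varepsilon\rfloor$, whereas the $\alpha$-Bowen radius $\varepsilon\mathrm{e}^{-\alpha i}$ becomes exponentially small as $i$ approaches $n-1$ and forces matching out to coordinate $\approx(1+\alpha)(n-1)$. After $\varepsilon\to 0$ the linear contribution $\varepsilon$ disappears from $\widetilde{h}_{top}$, while the factor $\alpha>0$ survives in $h_{top}^{\alpha}$, which is the structural reason why these two entropies, both defined via Bowen balls of shrinking radii, disagree even on a full shift.
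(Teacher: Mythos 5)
Your proposal is correct, and it differs from the paper's argument in a way worth noting. The paper proves the two inequalities for $\widetilde{h}_{top}(\sigma,\Sigma_k^{+})$ separately: the upper bound $\widetilde{h}_{top}(\sigma,\Sigma_k^{+},\tfrac1l)\leq(1+\tfrac1l)\log k$ comes from the crude count $r_{ml}(\Sigma_k^{+},\mathrm{e}^{-m})\leq k^{ml+m+1}$ along the subsequence $n=ml$, and the lower bound $\widetilde{h}_{top}(\sigma,\Sigma_k^{+})\geq\log k$ comes from the inclusion $B_n(x,\mathrm{e}^{-n\varepsilon})\subset B_n(x,\varepsilon)$ for large $n$ together with the known value $h_{top}(\sigma,\Sigma_k^{+})=\log k$. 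You instead prove the exact identification $B_n(x,\mathrm{e}^{-n\varepsilon})=[x_0,\ldots,x_{n-1+\lfloor n\varepsilon\rfloor}]$ (your floor bookkeeping for the equivalence $d(\sigma^jx,\sigma^jy)<\mathrm{e}^{-n\varepsilon}\iff x_{j+i}=y_{j+i},\ 0\leq i\leq\lfloor n\varepsilon\rfloor$, is right in both the integer and non-integer cases), and since these cylinders partition $\Sigma_k^{+}$ you get $r_n(\Sigma_k^{+},\mathrm{e}^{-n\varepsilon})=k^{n+\lfloor n\varepsilon\rfloor}$ exactly, hence the sharper statement $\widetilde{h}_{top}(\sigma,\Sigma_k^{+},\varepsilon)=(1+\varepsilon)\log k$ for every $\varepsilon>0$, from which $\widetilde{h}_{top}(\sigma,\Sigma_k^{+})=\log k$ follows by monotone passage $\varepsilon\to0$. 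Your route is more self-contained and even avoids the paper's slightly loose step of evaluating the $\limsup$ only along $n=ml$; what the paper's route buys is robustness, since the comparison $B_n(x,\mathrm{e}^{-n\varepsilon})\subset B_n(x,\varepsilon)$ and the appeal to classical entropy would survive in settings where the balls are not exactly cylinders. Both proofs handle the remaining equality $h_{top}^{\alpha}(\sigma,\Sigma_k^{+})=(1+\alpha)\log k$ the same way, by citing Corollary \ref{cor 4.1}(1).
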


\begin{proof}
It is easy to see that
$$ \widetilde{h}_{top}(\sigma,\Sigma_k^{+})=\lim_{l \to \infty}\limsup_{m \to \infty}\frac{\log r_{ml}(\Sigma_k^{+},\mathrm{e}^{-ml \cdot \frac{1}{l}})}{ml}=\lim_{l \to \infty}\limsup_{m \to \infty}\frac{\log r_{ml}(\Sigma_k^{+},\mathrm{e}^{-m})}{ml}.$$

Fix a positive integer $l$. According to  the definition  of $d$,  for every $m\in \mathbb{N}$ one  has  $r_{ml}(\Sigma_k^{+},e^{-m})\leq k^{ml+m+1}$. This implies that $\widetilde{h}_{top}(\sigma,\Sigma_k^{+},\frac{1}{l})\leq (1+\frac{1}{l})\log k.$
Hence, $$\widetilde{h}_{top}(\sigma,\Sigma_k^{+}) \leq \log k$$ by letting $l\to \infty$. Besides,  by  the fact that for each fixed $\varepsilon>0$, $B_n(x,\mathrm{e}^{-n\varepsilon})\subset B_n(x,\varepsilon)$ for sufficiently large $n$,  we have $\widetilde{h}_{top}(\sigma,\Sigma_k^{+}) \geq \log k$.    This shows that  $\widetilde{h}_{top}(\sigma,\Sigma_k^{+}) =\log k< h_{top}^{\alpha}(\sigma,\Sigma_k^{+})$.,
\end{proof}
Recall that $h_{top}(f,X)=\lim_{\varepsilon \to 0}  h_{top}^{0}(f, X,\varepsilon)$. For  the certain conditions, $h_{top}(f,X)$ can be expressed as the limit of  $\alpha$-topological $\varepsilon$-entropy by letting $\alpha \to 0$ and $\varepsilon \to 0$.

\begin{thm}
Let $(X,d,f)$ be a TDS.  If $\widetilde{h}_{top}(f,X)= h_{top}(f,X)$, then
$${h}_{top}(f,X)=\lim_{\varepsilon \to 0}\lim_{\alpha \to 0}h_{top}^{\alpha}(f, X,\varepsilon)=\sup_{\varepsilon >0}\inf_{\alpha >0}h_{top}^{\alpha}(f, X,\varepsilon).$$
\end{thm}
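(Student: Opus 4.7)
The plan is to show the common value of both expressions equals $h_{top}(f,X)$, via a two-sided sandwich: bound the quantity below by the classical topological entropy using a trivial ball inclusion, and above by the neutralized topological entropy using a subtler asymptotic inclusion, then close the sandwich with the hypothesis $\widetilde{h}_{top}(f,X)=h_{top}(f,X)$.

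\textbf{Step 1 (Monotonicity and rearrangement).} First I would record that $h_{top}^{\alpha}(f,X,\varepsilon)$ is non-decreasing in $\alpha$ and non-increasing in $\varepsilon$. The $\alpha$-monotonicity follows from $B_n^{\alpha_2}(x,\varepsilon)\subset B_n^{\alpha_1}(x,\varepsilon)$ whenever $\alpha_1\le\alpha_2$ (smaller balls force more spanning points), and the $\varepsilon$-monotonicity is immediate. Hence $\inf_{\alpha>0}h_{top}^\alpha(f,X,\varepsilon)=\lim_{\alpha\to 0^+}h_{top}^\alpha(f,X,\varepsilon)$ and $\sup_{\varepsilon>0}(\cdot)=\lim_{\varepsilon\to 0^+}(\cdot)$, so the two expressions in the theorem agree.

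\textbf{Step 2 (Lower bound).} From $B_n^{\alpha}(x,\varepsilon)\subset B_n(x,\varepsilon)$ (compare the $i=0$ coordinate), any $(n,\alpha,\varepsilon)$-spanning set is $(n,\varepsilon)$-spanning, so $r_n(f,\alpha,X,\varepsilon)\ge r_n(X,\varepsilon)$ and therefore $h_{top}^\alpha(f,X,\varepsilon)\ge h_{top}(f,X,\varepsilon)$ for every $\alpha\ge 0$. Passing to $\inf_\alpha$ and then $\sup_\varepsilon$ gives
$$\sup_{\varepsilon>0}\inf_{\alpha>0}h_{top}^\alpha(f,X,\varepsilon)\ge \sup_{\varepsilon>0}h_{top}(f,X,\varepsilon)=h_{top}(f,X).$$

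\textbf{Step 3 (Upper bound, the main point).} Here I would compare the $\alpha$-Bowen ball with the $\varepsilon$-neutralized Bowen ball. Fix $\delta>0$, $\varepsilon>0$ and $\alpha\in(0,\varepsilon)$. If $y\in B_n(x,e^{-n\varepsilon})$ then
$$\max_{0\le i\le n-1}e^{\alpha i}d(f^ix,f^iy)\;<\;e^{\alpha(n-1)-n\varepsilon}\;=\;e^{-\alpha}\,e^{(\alpha-\varepsilon)n},$$
and the right-hand side tends to $0$ as $n\to\infty$ because $\alpha<\varepsilon$. Consequently, for every sufficiently large $n$ (a threshold independent of $x$), we have $B_n(x,e^{-n\varepsilon})\subset B_n^\alpha(x,\delta)$ for all $x\in X$. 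Any neutralized $(n,e^{-n\varepsilon})$-spanning set is therefore an $(n,\alpha,\delta)$-spanning set, giving $r_n(f,\alpha,X,\delta)\le r_n(X,e^{-n\varepsilon})$ for large $n$ and thus
$$h_{top}^\alpha(f,X,\delta)\le \widetilde{h}_{top}(f,X,\varepsilon).$$
Letting $\alpha\to 0^+$ and then $\varepsilon\to 0^+$, followed by $\sup_\delta$ and the hypothesis, produces
$$\sup_{\delta>0}\inf_{\alpha>0}h_{top}^\alpha(f,X,\delta)\le \widetilde{h}_{top}(f,X)=h_{top}(f,X).$$

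Combining Steps 1--3 gives both asserted equalities. The only nontrivial ingredient is the geometric comparison in Step 3: its success hinges on the slack between the constant neutralized radius $e^{-n\varepsilon}$ and the asymptotically larger tightest constraint $e^{-\alpha(n-1)}\delta$ of the $\alpha$-Bowen ball at time $n-1$, a slack that is available precisely when $\alpha<\varepsilon$. This is where the flexibility to send $\alpha\to 0$ before $\varepsilon\to 0$ (equivalently, the $\sup_\varepsilon\inf_\alpha$ ordering) becomes essential, and it is the only place the assumption $\widetilde{h}_{top}(f,X)=h_{top}(f,X)$ is invoked.
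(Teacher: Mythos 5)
Your proposal is correct and takes essentially the same route as the paper: the lower bound comes from $d_n\le d_n^{\alpha}$, the upper bound from including the neutralized ball $B_n(x,\mathrm{e}^{-n\varepsilon})$ in an $\alpha$-Bowen ball once $\alpha<\varepsilon$ and $n$ is large (the paper uses $\alpha<\varepsilon/2$ and radius $\varepsilon/2$ instead of your auxiliary $\delta$), and the hypothesis $\widetilde{h}_{top}(f,X)=h_{top}(f,X)$ closes the sandwich; your Step 1 on monotonicity, which the paper leaves implicit, is a welcome addition justifying that the iterated limits agree with $\sup_{\varepsilon}\inf_{\alpha}$. The only quibble is the parenthetical in Step 2: the inclusion $B_n^{\alpha}(x,\varepsilon)\subset B_n(x,\varepsilon)$ holds because $\mathrm{e}^{\alpha i}\ge 1$ for every $0\le i\le n-1$, not by comparing the $i=0$ coordinate alone.
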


\begin{proof}
Fix $\varepsilon >0$. Choose a sufficiently large positive integer $n$  such that $\mathrm{e}^{-\frac{\varepsilon}{2}n}<\frac{\varepsilon}{2}$. If $x,y\in X$ with $d_n(x,y)<\mathrm{e}^{-n\varepsilon}$, then
$$d_n(x,y)<\frac{\varepsilon}{2}\mathrm{e}^{-\frac{\varepsilon}{2}n}<\frac{\varepsilon}{2}\mathrm{e}^{-\alpha n}$$
for any $0<\alpha <\frac{\varepsilon}{2}$. Hence, for any $0\leq j<n$, $\mathrm{e}^{\alpha j}d(f^jx,f^jy)<\frac{\varepsilon}{2}$. This shows  that $d_n^{\alpha}(x,y)<\frac{\varepsilon}{2}$. Then $ r_n(f,\alpha,X,\frac{\varepsilon}{2}) \leq r_n(X,\mathrm{e}^{-n\varepsilon})$. This means that
$$h_{top}^{\alpha}(f, X,\frac{\varepsilon}{2})\leq  \widetilde{h}_{top}(f,X,\varepsilon).$$
Letting $\alpha \to 0$ and then letting $\varepsilon \to 0$, we get
$$\lim_{\varepsilon \to 0}\lim_{\alpha \to 0}h_{top}^{\alpha}(f, X,\varepsilon)\leq \widetilde{h}_{top}(f,X).$$

Notice that  if $d_n^{\alpha}(x,y)<\varepsilon$, then $d_n(x,y)<\varepsilon$. This implies that  for any $\alpha >0$ and $\varepsilon >0$, $$h_{top}^{0}(f, X,\varepsilon)\leq h_{top}^{\alpha}(f, X,\varepsilon).$$

Then, together with the fact  $\widetilde{h}_{top}(f,X)= h_{top}(f,X)$, we can deduce that
$$h_{top}(f,X)\leq  \lim_{\varepsilon \to 0}\lim_{\alpha \to 0}h_{top}^{\alpha}(f, X,\varepsilon)\leq \widetilde{h}_{top}(f,X)=h_{top}(f,X).$$
\end{proof}

\subsection*{Acknowledgment}
We thank the anonymous referee for many valuable comments and suggestions that greatly improved the  quality of the paper.

\end{document}